\newtheorem{thm}{Theorem}[section]
\newtheorem{lem}[thm]{Lemma}
\newtheorem{prop}[thm]{Proposition}
\theoremstyle{definition}
\theoremstyle{remark}
\theoremstyle{remark}
\numberwithin{equation}{section}
\newcommand{\bdy}{\partial\Omega}
\newcommand{\BR}{\mathbf{B}_R}
\newcommand{\BRc}{\mathbf{B}^c_R}
\newcommand{\Jc}{\mathcal{J}_{\lambda}}
\newcommand{\Jclk}{\mathcal{J}_{\lambda(k)}}
\newcommand{\Ms}{\mathscr{M}}
\newcommand{\Om}{\Omega}
\newcommand{\Ov}{\Omega_\varepsilon}
\newcommand{\RN}{\mathbf{R}^N}
\newcommand{\R}{\mathbf{R}}
\newcommand{\La}{-\Delta\hspace{0.2mm}}
\newcommand{\Lp}{-\Delta_p\hspace{0.2mm}}
\newcommand{\bre}[1]{\left\{#1\right\}}
\newcommand{\n}[1]{\left\vert#1\right\vert}
\newcommand{\nm}[1]{\left\Vert#1\right\Vert}
\newcommand{\dx}{\hspace{0.36mm}dx}
\newcommand{\gradu}{\nabla u}
\newcommand{\gradul}{\nabla\ul}
\newcommand{\gradulk}{\nabla\ulk}
\newcommand{\gradum}{\nabla\um}
\newcommand{\gradv}{\nabla v}
\newcommand{\gradvl}{\nabla\vl}
\newcommand{\gradw}{\nabla\omega}
\newcommand{\gradx}{\nabla\xi}
\newcommand{\IOm}{\int_{\Om}}
\newcommand{\IOv}{\int_{\Ov}}
\newcommand{\IR}{\int_{\RN}}
\newcommand{\IBR}{\int_{\mathbf{B}_R}}
\newcommand{\IBRc}{\int_{\mathbf{B}^c_R}}
\newcommand{\ul}{u_\lambda}
\newcommand{\ult}{\tilde{u}_\lambda}
\newcommand{\ulk}{u_{\lambda(k)}}
\newcommand{\um}{u_\mu}
\newcommand{\ve}{v_\varepsilon}
\newcommand{\vl}{v_\lambda}
\newcommand{\vv}{\varphi_\varepsilon}
\newcommand{\CCON}{C^1_c\left(\RN\right)}
\newcommand{\DpR}{D^{1,\hspace{0.2mm}p}\left(\RN\right)}
\newcommand{\LalocR}{L^{\alpha}_{loc}\left(\RN\right)}
\newcommand{\LaOm}{L^{\alpha}\left(\Om\right)}
\newcommand{\LaR}{L^{\alpha}\left(\RN\right)}
\newcommand{\LlocR}{L^1_{loc}\left(\RN\right)}
\newcommand{\LlR}{L^1\left(\RN\right)}
\newcommand{\LpR}{L^p\left(\RN\right)}
\newcommand{\LpOm}{L^p\left(\Om\right)}
\newcommand{\LpcR}{L^{p^*}\left(\RN\right)}
\newcommand{\LpccR}{L^{\frac{p^*}{p^*-r}}\left(\RN\right)}
\newcommand{\LqR}{L^q\left(\RN\right)}
\newcommand{\LqOm}{L^q\left(\Om\right)}
\newcommand{\LVR}{L^q_{\mathrm V}\left(\RN\right)}
\newcommand{\LVOv}{L^q_{\mathrm V}\left(\Ov\right)}
\newcommand{\LKR}{L^r_{\mathrm K}\left(\RN\right)}
\newcommand{\LKBR}{L^r_{\mathrm K}\left(\BR\right)}
\newcommand{\LKOv}{L^r_{\mathrm K}\left(\Ov\right)}
\newcommand{\LsR}{L^s\left(\RN\right)}
\newcommand{\LsOm}{L^s\left(\Om\right)}
\newcommand{\LtOm}{L^t\left(\Om\right)}
\newcommand{\MOm}{M^{q,\hspace{0.2mm}p}\left(\Om\right)}
\newcommand{\MBR}{M^{q,\hspace{0.2mm}p}\left(\BR\right)}
\newcommand{\MRN}{M^{q,\hspace{0.2mm}p}\left(\RN\right)}
\newcommand{\McR}{M^{p^*,\hspace{0.2mm}p}\left(\RN\right)}
\newcommand{\MVBR}{M^{q,\hspace{0.2mm}p}_{\mathrm V}\left(\BR\right)}
\newcommand{\MVR}{M^{q,\hspace{0.2mm}p}_{\mathrm V}\left(\RN\right)}
\newcommand{\MqoOm}{M^{q_1,\hspace{0.2mm}p}\left(\Om\right)}
\newcommand{\MqtOm}{M^{q_2,\hspace{0.2mm}p}\left(\Om\right)}
\newcommand{\MinOm}{M^{\infty,\hspace{0.2mm}p}\left(\Om\right)}
\newcommand{\MinR}{M^{\infty,\hspace{0.2mm}p}\left(\RN\right)}
\newcommand{\MqoR}{M^{q_1,\hspace{0.2mm}p}\left(\RN\right)}
\newcommand{\MqtR}{M^{q_2,\hspace{0.2mm}p}\left(\RN\right)}
\newcommand{\Wop}{W^{1,\hspace{0.2mm}p}\left(\Om\right)}
\newcommand{\WpR}{W^{1,\hspace{0.2mm}p}\left(\RN\right)}
\newcommand{\Wlpq}{W^1_{p,\hspace{0.2mm}q}\left(\RN\right)}
\begin{document}

\title[Compact Sobolev embeddings and positive solutions]
{\bf Compact Sobolev embeddings and positive solutions to a quasilinear equation with indefinite nonlinearities}

\author[Q. Han]
{\sf Qi Han}

\address{Department of Science and Mathematics, Texas A\&M University at San Antonio
\vskip 2pt San Antonio, Texas 78224, USA \hspace{14.4mm}{\sf Email: qhan@tamusa.edu}}

%\email{qhan@tamusa.edu}
\thanks{{\sf 2010 Mathematics Subject Classification.} 35B09, 35J20, 35J92, 46E35.}
\thanks{{\sf Keywords.} Compact embedding results, elliptic equations, positive solutions, Sobolev spaces.}
%\thanks{{\sf Dedicated to my little angel Jacquelyn and her mother, my dear wife, Jingbo.}}
%\date{}

\dedicatory{Dedicated to my little angel Jacquelyn and her mother, my dear wife, Jingbo.}

%\commby{}

% -----------------------------------------------------------------------------

\begin{abstract}
  In this paper, we study the existence and multiplicity results of nontrivial positive solutions to the following quasilinear elliptic equation on $\RN$, when $N\geq2$,
  \begin{equation}
  \Lp u=\lambda\hspace{0.2mm}K(x)u^{r-1}-V(x)u^{q-1}.\nonumber
  \end{equation}
  Here, $K(x),V(x)>0$ are suitable potentials, $1<p<r<q<\infty$, and $\lambda>0$ is a parameter.
  To study this problem, some compact embedding results regarding $\MVR\hookrightarrow\LKR$ are proved that unify and extend some recent results of the author \cite{Ha1,Ha2,Ha3}.
\end{abstract}

% -----------------------------------------------------------------------------
\maketitle
% -----------------------------------------------------------------------------

% +++ below is section 1 %%%%%%%%%%%%%%%%%%%%%%%%%%%%%%%%%%%%%%%%%%%%%%%%%%%%%%%%%%%%%%%%%%%%%%%%%%%%%%%%%%%%%%%%%%%%%%%%%%%%%%%%%%%%%%%%%%%%%%%%%%%
% &&& %%%%%%%%%%%%%%%%%%%%%%%%%%%%%%%%%%%%%%%%%%%%%%%%%%%%%%%%%%%%%%%%%%%%%%%%%%%%%%%%%%%%%%%%%%%%%%%%%%%%%%%%%%%%%%%%%%%%%%%%%%%%%%%%%%%%%%%%%%%%%%
\section{Introduction}\label{Int} % use lowercase except for proper names
\noindent Ambrosetti, Brezis and Cerami in the seminal paper \cite{ABC} studied the existence and multiplicity results of positive solutions to the following elliptic equation
\begin{equation}
\La u=\lambda\hspace{0.2mm}u^{\rho-1}+u^{\varrho-1}\nonumber
\end{equation}
on a bounded, smooth domain subject to the Dirichlet data $u=0$, where $1<\rho<2<\varrho<2^*$ and $\lambda>0$ is a constant; see the fine paper of Ambrosetti, Garcia-Azorero and Peral \cite{AGP} as well.
There have been quite a few papers devoted to the study of similar problems, and in particular Alama and Tarantello \cite{AT} investigated a related problem with indefinite nonlinearities
\begin{equation}\label{eq1.1}
\La u-\lambda\hspace{0.2mm}u=k(x)u^{\rho-1}-h(x)u^{\varrho-1},\hspace{2mm}u>0
\end{equation}
in the same context as assumed in \cite{ABC,AGP} with $2<\rho<\varrho$ and some constant $\lambda\in\R$.
Notice the existence results for equation \eqref{eq1.1} depend on both the values of $\lambda$ and the integrability of the ratio function $k^{\alpha_1}(x)/h^{\alpha_2}(x)$ for some explicit exponents $\alpha_1,\alpha_2$ concerning $N,\rho,\varrho$.

Chabrowski \cite{Ch}, and Pucci and R\u{a}dulescu \cite{PR,Ra} recently extended the above work of Alama and Tarantello to $\RN$, and they studied the following quasilinear elliptic equation
\begin{equation}
\Lp u+u^{p-1}=\lambda\hspace{0.2mm}u^{r-1}-h(x)u^{q-1},\hspace{2mm}u\geq0.\nonumber
\end{equation}
Here, $h(x)>0:\RN\to\R$ satisfies an integrability condition, $2\leq p<r<q<2^*$, and $\lambda>0$ is a constant.
Existence, nonexistence and multiplicity results are given in \cite{Ch,PR,Ra}.
One may also check the interesting papers \cite{CD,AP,RS} and the references therein for related results.

In this paper, we are concerned with the existence of nontrivial positive solutions to
\begin{equation}\label{eq1.2}
\Lp u=\lambda\hspace{0.2mm}K(x)u^{r-1}-V(x)u^{q-1}
\end{equation}
on $\RN$ when $N\geq2$ in the function space $\MVR$.
Here, we assume that $K(x),V(x)>0:\RN\to\R$ are appropriate potentials, $1<p<r<q<\infty$, and $\lambda>0$ is a parameter.

As shown in \cite{Ra}, problem \eqref{eq1.2} is related to the Lane-Emden-Fowler equation that arises in the boundary-layer theory of viscous fluids; see for example the survey \cite{Wo}.
This problem goes back to the work of Lane in 1869 and was originally motivated by his interest in computing both the temperature and the density of mass on the surface of the sun; equation \eqref{eq1.2} characterizes the behavior of the density of a gas sphere in the hydrostatic equilibrium, where the index $r$ (the polytropic index in astrophysics) is related to the ratio of the specific heats of the gas.
On the other hand, as claimed in \cite{PR}, \eqref{eq1.2} may be viewed as a pattern formation prototype in biology associated with the steady-state problem modelling chemotactic aggregation, as introduced by Keller and Segel \cite{KS}; it also plays an important role in the study of activator-inhibitor systems modelling biological pattern formation, as proposed by Gierer and Meihardt \cite{GM}.
Other aspects of applications regarding problem \eqref{eq1.2} and a number of recent general application results can be found for instance in the monograph of Ghergu and R\u{a}dulescu \cite{GR}.

We now describe suitable function space settings for our work.
Let $V(x)>0$ be a Lebesgue measurable function in $\RN$.
When $1\leq p<N$, we designate $\DpR$, the space of functions $u$ with $u\in\LpcR$ and $\n{\gradu}\in\LpR$, as the base space to define
\begin{equation}
\MVR=\bre{u\in\DpR:\nm{u}_{\MVR}=\nm{u}_{\LVR}+\nm{\n{\gradu}}_{p,\hspace{0.2mm}\RN}<\infty},\nonumber
\end{equation}
where $p^*=\frac{Np}{N-p}$ denotes the Sobolev critical index and for $1\leq q<\infty$ we write
\begin{equation}
\LVR=\bre{u\in\LlocR:\nm{u}^q_{\LVR}=\IR\n{u}^qV\dx<\infty}.\nonumber
\end{equation}
When $N\leq p<\infty$, we assume $\inf\limits_{\RN}V(x)\geq V_0>0$ and write $\LVR$ just out of $\LqR$ for $1\leq q<\infty$, and then define $\MVR$ further requesting $\n{\gradu}\in\LpR$.

When $V(x)\equiv1$, one has the space $\MRN$ as analyzed in Han \cite{Ha1,Ha2,Ha3} that may be viewed as a natural extension of the classical Sobolev space $\WpR$.
Note $\MRN$ was initially introduced in Maz'ya \cite[section 5.1.1]{Ma} using the notation $\Wlpq$.

\vskip 9pt
{\bf Standing Assumptions.}
\vskip 6pt
\noindent{\bf(i).} $N\geq2$, $p\in\left(1,\infty\right)$, $q\in\left(p,\infty\right)$, $r\in\left(p,q\right)$, and $\lambda>0$.
\vskip 6pt
\noindent{\bf(ii).} $K(x),V(x)>0$ are Lebesgue measurable functions with $K(x)\in\LalocR$ for some $\alpha\in\left(\frac{\max\left\{p^*,q\right\}}{\max\left\{p^*,q\right\}-r},\infty\right]$ and $\inf\limits_{\mathbf{D}}V(x)\geq V_{\mathbf{D}}>0$ for each compact subset $\mathbf{D}\Subset\RN$ when $1<p<N$ while $K(x)\in\LalocR$ for some $\alpha\in\left(1,\infty\right]$ and $\inf\limits_{\RN}V(x)\geq V_0>0$ when $N\leq p<\infty$.
\vskip 0pt
\noindent{\bf(iii).} When $1<p<N$, $K^{\frac{p^*+\beta\left(p^*-q\right)}{p^*-r}}(x)V^{-\beta}(x)\in\LlR$ if $p<r<\min\left\{p^*,q\right\}$ for some $\beta\in\left(\frac{p^*\left(r-p\right)}{\left(p^*-p\right)\left(q-r\right)},\frac{r}{q-r}\right]$ and $K^{\frac{q}{q-r}}(x)V^{-\frac{r}{q-r}}(x)\in\LlR$ if $p^*\leq r<q$; when $N\leq p<\infty$, $K^{\frac{s+\beta\left(s-q\right)}{s-r}}(x)V^{-\beta}(x)\in\LlR$ if $p<r<q$ for some $\beta\in\left(\max\left\{0,\frac{Nq+pr-Np-pq}{p\left(q-r\right)}\right\},\frac{r}{q-r}\right]$ and $s\in\left[q,\infty\right)$.
\vskip 0pt
\noindent{\bf(iv).} $p\in\left(1,N\right)$, $q\in\left(p,\infty\right)$, $r\in\left(p,\min\left\{p^*,q\right\}\right)$, and $K(x)\in\LpccR$.
\vskip 9pt

\begin{thm}\label{T1.1}
Under the {\bf Standing Assumptions (i)-(iii)}, there exists a $\lambda_1\geq0$ such that equation \eqref{eq1.2} has at least a positive solution in $\MVR$ provided $\lambda>\lambda_1$.
Furthermore, if the {\bf Standing Assumption (iv)} holds, then $\lambda_1>0$ and equation \eqref{eq1.2} has at least a positive solution in $\MVR$ if and only if $\lambda\geq\lambda_1$; moreover, there is a $\lambda_2\left(\geq\lambda_1\right)$ such that equation \eqref{eq1.2} has at least two positive solutions in $\MVR$ for every $\lambda>\lambda_2$.
\end{thm}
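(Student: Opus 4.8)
\emph{Sketch of the approach.} The plan is to obtain positive solutions of \eqref{eq1.2} as critical points of the energy functional
\[
\Jc(u)=\frac1p\IR\n{\gradu}^p\dx+\frac1q\IR V(x)\n{u}^q\dx-\frac{\lambda}{r}\IR K(x)\n{u}^r\dx,\qquad u\in\MVR.
\]
First I would use the boundedness of the embedding $\MVR\hookrightarrow\LKR$ together with the integrability hypotheses of {\bf Standing Assumption (iii)} to verify that $\Jc$ is well defined and of class $C^1$ on $\MVR$, and --- the decisive point --- that it is coercive and bounded below: H\"older's and Young's inequalities give, for each $\varepsilon>0$,
\[
\frac{\lambda}{r}\IR K\n{u}^r\dx\leq\varepsilon\pr{\frac1p\IR\n{\gradu}^p\dx+\frac1q\IR V\n{u}^q\dx}+C(\varepsilon),
\]
with $C(\varepsilon)$ depending only on $\varepsilon$, $\lambda$ and the fixed $L^1$-norms appearing in (iii). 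When $p<r<\min\{p^*,q\}$ one interpolates $\n{u}^r$ between $\LpcR$ (controlled by $\nm{\gradu}_p$ through Sobolev's inequality) and $\LVR$, against a power $V^{-\beta}$ of $V$, with $\beta$ ranging exactly over the interval in (iii) that makes the H\"older exponents conjugate and closes the Young step; when $p^*\leq r<q$ a single Young splitting of $K\n{u}^r$ against $V^{r/q}\n{u}^r$ does it, with remainder $K^{q/(q-r)}V^{-r/(q-r)}\in\LlR$. Since the first two terms of $\Jc$ are convex and continuous, hence weakly lower semicontinuous, while the last is weakly continuous by the \emph{compactness} of $\MVR\hookrightarrow\LKR$, the direct method furnishes a minimizer $\ul\in\MVR$, which I may take nonnegative because $\Jc(\n{u})=\Jc(u)$.

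Next I would check that $\ul$ is a weak solution of \eqref{eq1.2}, invoke the interior regularity theory for quasilinear equations with right-hand side in $\LalocR$ (Serrin, DiBenedetto, Tolksdorf) to get $\ul\in C^1_{loc}$, and apply the strong maximum principle for $\Lp$ (V\'azquez) to conclude $\ul>0$ on $\RN$ whenever $\ul\not\equiv0$. The minimizer is nontrivial exactly when $\inf_{\MVR}\Jc<0$, and testing with $t\varphi$ for a fixed $0\leq\varphi\in\MVR$ with $\IR K\varphi^r\dx>0$ gives $\Jc(t\varphi)\to-\infty$ as $\lambda\to\infty$; hence \eqref{eq1.2} admits a positive solution for all large $\lambda$. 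Let $\lambda_1$ be the infimum of those $\lambda>0$ for which \eqref{eq1.2} has a positive solution in $\MVR$; a standard monotonicity argument --- a positive solution at $\mu$ is a subsolution of the $\lambda$-equation for $\lambda>\mu$, and minimizing $\Jc$ over the order cone above it (or over a frozen nonlinearity) produces a positive solution at each $\lambda>\mu$ --- shows that this set contains $(\lambda_1,\infty)$, so $\lambda_1\in[0,\infty)$ and the first assertion holds.

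Assume now {\bf Standing Assumption (iv)}. Then $K\in\LpccR$ and Sobolev's inequality $\nm{u}_{p^*}\leq S^{-1/p}\nm{\gradu}_p$ yield $\IR K\n{u}^r\dx\leq C_0\nm{\gradu}_p^r$; testing \eqref{eq1.2} with a positive solution $u$ gives $\nm{\gradu}_p^p+\IR Vu^q\dx=\lambda\IR Ku^r\dx$, so on one hand $\nm{\gradu}_p^{r-p}\geq(C_0\lambda)^{-1}$, and on the other, absorbing $\lambda\IR Ku^r\dx$ into the left side by the Young inequality above, $\nm{\gradu}_p\leq C_1\lambda^{\theta}$ for some $\theta>0$. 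These bounds clash for $\lambda$ small, so no positive solution exists then and $\lambda_1>0$. To see the solution set equals $[\lambda_1,\infty)$ I would combine the monotonicity above with a limiting argument at the endpoint: taking $\lambda_k\downarrow\lambda_1$ with positive solutions $u_k$, the two a priori bounds keep $\bre{u_k}$ bounded and bounded away from $0$ in $\MVR$, and a weakly convergent subsequence, upgraded to strong convergence via the compact embedding $\MVR\hookrightarrow\LKR$ and the $(S_+)$-property of $\Lp$, has a nontrivial --- hence positive --- limit solving \eqref{eq1.2} at $\lambda_1$.

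Finally, for the second solution I would run the Mountain Pass Theorem. Coercivity bounds every Palais--Smale sequence in $\MVR$, and the compact embedding together with the monotonicity of $\Lp$ turns the weak limit into a strong one, so $\Jc$ satisfies $(PS)$. On $\bre{u:\nm{\gradu}_p=\rho}$ one has $\Jc(u)\geq\frac1p\rho^p-\frac{\lambda C_0}{r}\rho^r\geq\frac1{2p}\rho^p>0$ for $\rho$ small, and the same estimate forces $\nm{\gradu}_p>\rho$ for any $u$ with $\Jc(u)<0$; hence, whenever $\inf_{\MVR}\Jc<0$ --- which happens for $\lambda$ past some $\lambda_2\geq\lambda_1$ --- the origin and the minimizer $\ul$ (with $\Jc(\ul)=\inf_{\MVR}\Jc<0$) realize the mountain-pass geometry, the minimax level is a positive critical value, and the corresponding critical point $\vl$ is distinct from both $0$ and $\ul$. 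Performing the minimax within the cone of nonnegative functions (legitimate since $\Jc(\n{u})=\Jc(u)$) and invoking regularity and the maximum principle once more make $\vl$ a second positive solution for every $\lambda>\lambda_2$. I expect the main difficulties to be the coercivity estimate --- checking that the H\"older/Young interpolation genuinely closes for the stated ranges of $\beta$ in (iii) --- and, at $\lambda=\lambda_1$, ruling out a loss of compactness (that $u_k\not\rightharpoonup0$), which is precisely where the compactness of $\MVR\hookrightarrow\LKR$ is indispensable.
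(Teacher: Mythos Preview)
Your outline is correct and follows the paper's own argument essentially step for step: coercivity and weak lower semicontinuity of $\Jc$ via the H\"older--Young interpolation against $K^{\beta_1}V^{-\beta}$ (the paper's Lemmas~3.1--3.2), the sub-solution/constrained-minimization device to propagate existence across $(\lambda_1,\infty)$ (Proposition~3.3), the two-sided a~priori bounds under {\bf(iv)} giving $\lambda_1>0$ and the endpoint solution via compactness plus the $(S_+)$-type inequality (Lemmas~3.4--3.5, Proposition~3.6), and the mountain pass for the second solution with $\lambda_2=\tilde\lambda$ defined exactly so that $\inf\Jc<0$. The one place you go beyond the paper is the appeal to $C^1_{loc}$ regularity and V\'azquez to obtain strict positivity; the paper never claims this (it works with ``nontrivial $\ul\geq0$'' throughout), and under {\bf(ii)} alone $V$ carries no local \emph{upper} bound, so $Vu^{q-1}\in\LalocR$ is not guaranteed and that step would need an extra hypothesis on $V$.
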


%Our result extends the works of \cite{AP,Ch,PR,Ra} in several directions: for example, we consider $1<p<\infty$ but not merely $1<p<N$, and $p<r<q$ but not merely $p<r<\min\left\{p^*,q\right\}$ when $p\in\left(1,N\right)$; besides, we do not require $r>2$ and our hypotheses {\bf(ii)-(iii)}, especially the latter, are broader than those applied in \cite{AP}.
%Yet the author does appreciate and highly recommends the paper \cite{AP} where many issues were made mathematically rigorous, especially theorem a.3 in \cite{AP}; see also theorem 2.5 from Candela and Palmieri \cite{CP}.
%These mountain pass theorems are of independent interest and very helpful in attacking various problems arising from other settings.
%Our strategy of proof exploits similar ideas to those used in \cite{AP,Ch,PR,Ra}.

% +++ below is section 2 %%%%%%%%%%%%%%%%%%%%%%%%%%%%%%%%%%%%%%%%%%%%%%%%%%%%%%%%%%%%%%%%%%%%%%%%%%%%%%%%%%%%%%%%%%%%%%%%%%%%%%%%%%%%%%%%%%%%%%%%%%%
% &&& %%%%%%%%%%%%%%%%%%%%%%%%%%%%%%%%%%%%%%%%%%%%%%%%%%%%%%%%%%%%%%%%%%%%%%%%%%%%%%%%%%%%%%%%%%%%%%%%%%%%%%%%%%%%%%%%%%%%%%%%%%%%%%%%%%%%%%%%%%%%%%
\section{Function space deliberations}\label{FSD} % use lowercase except for proper names
\noindent This section is devoted to the analyses of the function space settings that will be needed later.
From now on, we shall denote both continuous embedding of function spaces and convergence of functions by `` $\to$ '', compact embedding of function spaces by `` $\hookrightarrow$ '', and weak convergence of functions by `` $\rightharpoonup$ ''.
Other notations will be specified when appropriate.

Recall $\MOm$ is described as the space of Sobolev functions $u$ on $\Om$ that are in $\LqOm$ but $\n{\gradu}$ are in $\LpOm$ for $1\leq p,q\leq\infty$.
It is a Banach space with respect to the norm
\begin{equation}\label{eq2.1}
\nm{u}_{\MOm}=\nm{u}_{q,\hspace{0.2mm}\Om}+\nm{\n{\gradu}}_{p,\hspace{0.2mm}\Om}.
\end{equation}

First, consider the case where $\Om$ is a bounded domain with a compact, Lipschitz boundary $\bdy$.
When $p\in\left[1,N\right)$, we have
\begin{equation}
\left\{\begin{array}{ll}
\MOm=\Wop&\mathrm{for}\hspace{2mm}1\leq q\leq p^*,\\\\
\MqoOm\subseteq\MqtOm&\mathrm{for}\hspace{2mm}p^*\leq q_2\leq q_1\leq\infty.
\end{array}\right.\nonumber
\end{equation}
When $p=N$, we have
\begin{equation}
\MinOm\subseteq\MOm=\Wop\hspace{2mm}\mathrm{for}\hspace{2mm}1\leq q<\infty.\nonumber
\end{equation}
When $p\in\left(N,\infty\right]$, we have
\begin{equation}
\MOm=\Wop\hspace{2mm}\mathrm{for}\hspace{2mm}1\leq q\leq\infty.\nonumber
\end{equation}

\begin{lem}\label{L2.1}
Assume $1\leq q\leq\infty$.
When $1\leq p<N$, then the embedding $\iota:\MOm\to\LsOm$ is continuous if $1\leq s\leq\max\left\{p^*,q\right\}$ and compact if $1\leq s<\max\left\{p^*,q\right\}$.
When $p=N$, then the embedding $\iota:\MOm\hookrightarrow\LsOm$ is compact if $1\leq s<\infty$.
When $N<p\leq\infty$, then the embedding $\iota:\MOm\to\LsOm$ is continuous if $1\leq s\leq\infty$ and compact if $1\leq s<\infty$.
\end{lem}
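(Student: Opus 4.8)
The plan is to reduce everything to the classical Sobolev embedding theorem on the bounded Lipschitz domain $\Om$ together with the structure of $\MOm$ recalled just above. For the continuity statements when $1\le p<N$: if $1\le q\le p^*$ then $\MOm=\Wop$ and the classical embedding $\Wop\to L^s(\Om)$ for $1\le s\le p^*$ gives the claim, since $\max\{p^*,q\}=p^*$ here; if $q>p^*$, then by definition $u\in\LqOm$ directly and also $u\in\Wop\to L^{p^*}(\Om)$, so $u\in L^s(\Om)$ for every $p^*\le s\le q$ by interpolation of $L^s$ spaces on a finite-measure set, while for $1\le s<p^*$ one uses the $L^{p^*}$ bound; hence $\iota:\MOm\to L^s(\Om)$ is bounded for all $1\le s\le\max\{p^*,q\}$. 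The estimates are uniform in the $\MOm$-norm \eqref{eq2.1}, which is what continuity of $\iota$ means.

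For compactness when $1\le p<N$ and $1\le s<\max\{p^*,q\}$: take a bounded sequence $(u_n)$ in $\MOm$. Since $(u_n)$ is bounded in $\Wop$, the Rellich–Kondrachov theorem gives a subsequence converging strongly in $L^t(\Om)$ for every $1\le t<p^*$, and (passing to a further subsequence) a.e. in $\Om$. If $s<p^*$ we are done. If $p^*\le s<\max\{p^*,q\}=q$, then $(u_n)$ is bounded in $\LqOm$; combining the a.e. convergence with the higher integrability, a standard uniform-integrability / Vitali argument (or interpolation: $\|u_n-u_m\|_s\le\|u_n-u_m\|_{t}^{1-\theta}\|u_n-u_m\|_{q}^{\theta}$ with $t<p^*$ chosen so that $s=(1-\theta)t+\theta q$ with $\theta\in[0,1)$, and the first factor $\to0$ while the second stays bounded) yields strong convergence in $L^s(\Om)$. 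This handles the case $p<N$. When $p=N$, the classical Sobolev embedding gives $\Wop\to L^t(\Om)$ for every $t<\infty$ with Rellich compactness for every $t<\infty$; since $\MOm=\Wop$ (indeed $\MOm\supseteq M^{\infty,p}(\Om)$ and $\MOm=\Wop$), the compactness of $\iota:\MOm\hookrightarrow L^s(\Om)$ for all $1\le s<\infty$ is immediate. When $N<p\le\infty$, Morrey's embedding gives $\Wop\to C^{0,\gamma}(\overline{\Om})\hookrightarrow L^\infty(\Om)$ with the inclusion into $C^{0,\gamma}(\overline\Om)$ compact for a slightly smaller Hölder exponent (Arzelà–Ascoli); since $\Om$ has finite measure, $L^\infty(\Om)\to L^s(\Om)$ continuously for all $1\le s\le\infty$, giving continuity of $\iota$ up to $s=\infty$, and composing the compact inclusion $\Wop\hookrightarrow C(\overline\Om)$ with the bounded map $C(\overline\Om)\to L^s(\Om)$ ($s<\infty$) yields compactness of $\iota:\MOm\hookrightarrow L^s(\Om)$ for all finite $s$.

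The only genuinely nontrivial point is the interpolation step pushing compactness from $L^t$ ($t<p^*$) up to $L^s$ with $p^*\le s<q$ in the case $p<N$, $q>p^*$; everything else is a direct citation of Rellich–Kondrachov, Morrey, and the elementary continuity of $L^s$-inclusions on finite-measure domains. I expect the main obstacle to be making the interpolation argument clean: one must verify that, for the given $s$ with $p^*\le s<q$, there is an admissible $t<p^*$ and $\theta\in[0,1)$ with $\frac1s=\frac{1-\theta}{t}+\frac{\theta}{q}$ — which is possible precisely because $s<q$ forces $\theta<1$ and then $t$ can be taken below $p^*$ — and to note that the compactness fails at the endpoint $s=\max\{p^*,q\}$ (so the strict inequality is sharp), consistent with the continuity-only claim at that endpoint.
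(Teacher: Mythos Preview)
Your argument is correct. The paper does not give its own proof of this lemma; immediately after stating Lemmas \ref{L2.1} and \ref{L2.2} it simply writes ``All the preceding results can be found with details in \cite{Rab} and \cite{Ha1,Ha2,Ha3}.'' Your route via the classical Rellich--Kondrachov theorem, Morrey's embedding, and $L^p$-interpolation on a finite-measure domain is exactly the standard derivation one would expect those references to contain, so there is nothing genuinely different to compare.

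One small slip worth fixing: in the compactness step for $p<N$, $p^*\le s<q$, you first write the interpolation relation as $s=(1-\theta)t+\theta q$, which is the wrong convexity; the correct relation is the one you use in your final paragraph, namely $\tfrac{1}{s}=\tfrac{1-\theta}{t}+\tfrac{\theta}{q}$, and with it the inequality $\nm{u_n-u_m}_{s}\le\nm{u_n-u_m}_{t}^{1-\theta}\nm{u_n-u_m}_{q}^{\theta}$ is Hölder's interpolation. With that correction the argument is clean: given $p^*\le s<q$, pick any $\theta\in(0,1)$ with $\tfrac{1}{s}>\tfrac{1-\theta}{p^*}+\tfrac{\theta}{q}$ (possible since $s<q$ forces $\tfrac{1}{s}>\tfrac{1}{q}$ and the right-hand side decreases from $\tfrac{1}{p^*}$ to $\tfrac{1}{q}$ as $\theta$ increases), then solve for $t$ to get $t<p^*$, and Rellich--Kondrachov handles the $L^t$ factor.
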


Next, consider $\Om=\RN$.
When $p\in\left[1,N\right)$, we denote by $\DpR$ the space of functions $u$ with $u\in\LpcR$ and $\n{\gradu}\in\LpR$; through \textsf{Gagliardo-Nirenberg-Sobolev inequality}, there exists a sharp constant $C_1>0$, depending on $N,p$, such that
\begin{equation}\label{eq2.2}
\nm{u}_{p^*,\hspace{0.2mm}\RN}\leq C_1\nm{\n{\gradu}}_{p,\hspace{0.2mm}\RN},\hspace{6mm}\forall\hspace{2mm}u\in\DpR.
\end{equation}
This leads to $\DpR=\McR$ and we in addition have
\begin{equation}
\MqoR\subseteq\MqtR\hspace{2mm}\mathrm{for}\hspace{2mm}
\begin{array}{ll}
\mathrm{either}&1\leq q_1\leq q_2\leq p^*,\medskip\\
\mathrm{or}&p^*\leq q_2\leq q_1<\infty.
\end{array}\nonumber
\end{equation}
When $p=N$, we have
\begin{equation}
\MqoR\subseteq\MqtR\hspace{2mm}\mathrm{for}\hspace{2mm}1\leq q_1\leq q_2<\infty.\nonumber
\end{equation}
When $p\in\left(N,\infty\right]$, we have
\begin{equation}
\MqoR\subseteq\MqtR\subseteq\MinR\hspace{2mm}\mathrm{for}\hspace{2mm}1\leq q_1\leq q_2<\infty.\nonumber
\end{equation}

\begin{lem}\label{L2.2}
Assume that $1\leq q<\infty$.
When $1\leq p<N$, then the embedding $\iota:\MRN\to\LsR$ is continuous if $\min\left\{p^*,q\right\}\leq s\leq\max\left\{p^*,q\right\}$.
When $p=N$, then the embedding $\iota:\MRN\to\LsR$ is continuous if $q\leq s<\infty$.
When $N<p\leq\infty$, then the embedding $\iota:\MRN\to\LsR$ is continuous if $q\leq s\leq\infty$.
\end{lem}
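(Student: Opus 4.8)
The plan is to reduce each of the three parts of the statement to an interpolation inequality of the form $\nm{u}_{s,\hspace{0.2mm}\RN}\leq C\,\nm{u}_{q,\hspace{0.2mm}\RN}^{\theta}\nm{\n{\gradu}}_{p,\hspace{0.2mm}\RN}^{1-\theta}$ for a suitable $\theta\in\br{0,1}$ (with the gradient term furnished by \eqref{eq2.2} in the subcritical range), and then to conclude with a single application of the arithmetic-geometric mean bound $a^{\theta}b^{1-\theta}\leq\theta a+\pr{1-\theta}b\leq a+b$, valid for $a,b\geq0$. Because $\nm{u}_{\MRN}=\nm{u}_{q,\hspace{0.2mm}\RN}+\nm{\n{\gradu}}_{p,\hspace{0.2mm}\RN}$, any such estimate immediately produces $\nm{u}_{s,\hspace{0.2mm}\RN}\leq C\nm{u}_{\MRN}$, that is, continuity of $\iota$.

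For $1\leq p<N$ I would argue straight from \eqref{eq2.2}. Since $\DpR=\McR$, any $u\in\MRN$ lies in $\LpcR\cap\LqR$; for $s$ between $q$ and $p^{*}$ write $\frac{1}{s}=\frac{\theta}{q}+\frac{1-\theta}{p^{*}}$ with $\theta\in\br{0,1}$, apply H\"older's inequality to get $\nm{u}_{s,\hspace{0.2mm}\RN}\leq\nm{u}_{q,\hspace{0.2mm}\RN}^{\theta}\nm{u}_{p^{*},\hspace{0.2mm}\RN}^{1-\theta}$, and invoke \eqref{eq2.2} to replace $\nm{u}_{p^{*},\hspace{0.2mm}\RN}$ by $C_{1}\nm{\n{\gradu}}_{p,\hspace{0.2mm}\RN}$; the arithmetic-geometric mean step then settles the whole range $\min\bre{p^{*},q}\leq s\leq\max\bre{p^{*},q}$.

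For $N<p\leq\infty$ I would invoke Morrey's inequality: any $u$ with $\n{\gradu}\in\LpR$ has a representative that is H\"older continuous of exponent $\alpha=1-\frac{N}{p}$ (Lipschitz when $p=\infty$), with $\br{u}_{C^{0,\alpha}\pr{\RN}}\leq C\nm{\n{\gradu}}_{p,\hspace{0.2mm}\RN}$. Together with $u\in\LqR$, a short covering estimate---if $\n{u\pr{x_{0}}}=t>0$ then $\n{u}\geq t/2$ on a ball about $x_{0}$ of radius of order $\pr{t\,/\,\br{u}_{C^{0,\alpha}}}^{1/\alpha}$, whose $\LqR$-mass is at most $\nm{u}_{q,\hspace{0.2mm}\RN}^{q}$---yields $\nm{u}_{\infty,\hspace{0.2mm}\RN}\leq C\nm{u}_{q,\hspace{0.2mm}\RN}^{\theta}\nm{\n{\gradu}}_{p,\hspace{0.2mm}\RN}^{1-\theta}$ with $\theta=\frac{q\alpha}{q\alpha+N}$, so in particular $u\in L^{\infty}\pr{\RN}$; this is where the global $\LqR$-integrability is essential, since membership in $C^{0,\alpha}_{loc}$ alone would allow growth at infinity. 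For $q\leq s<\infty$ one then has $\nm{u}_{s,\hspace{0.2mm}\RN}^{s}=\IR\n{u}^{s-q}\n{u}^{q}\dx\leq\nm{u}_{\infty,\hspace{0.2mm}\RN}^{s-q}\nm{u}_{q,\hspace{0.2mm}\RN}^{q}$, and this together with the $L^{\infty}$ bound and the trivial case $s=q$ covers $q\leq s\leq\infty$.

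The borderline case $p=N$ is the one I expect to give real trouble, since now there is no Sobolev exponent to interpolate against and $W^{1,\hspace{0.2mm}N}\pr{\RN}\not\hookrightarrow L^{\infty}\pr{\RN}$. My preferred route is to cite the Gagliardo-Nirenberg interpolation inequality in the form $\nm{u}_{s,\hspace{0.2mm}\RN}\leq C\nm{\n{\gradu}}_{N,\hspace{0.2mm}\RN}^{1-q/s}\nm{u}_{q,\hspace{0.2mm}\RN}^{q/s}$, valid for $q\leq s<\infty$ (the exclusion of $s=\infty$ reflecting precisely the failure just recorded), and then to finish with the arithmetic-geometric mean step as before. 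If one prefers to keep everything self-contained from \eqref{eq2.2}, then for $q\geq N$ one can bootstrap instead: applying \eqref{eq2.2} with $p=1$ to $\n{u}^{\gamma}$ and then H\"older's inequality with conjugate exponents $N$ and $\frac{N}{N-1}$ gives $\nm{u}_{\gamma N/\pr{N-1},\hspace{0.2mm}\RN}^{\gamma}\leq C\gamma\,\nm{u}_{\pr{\gamma-1}N/\pr{N-1},\hspace{0.2mm}\RN}^{\gamma-1}\nm{\n{\gradu}}_{N,\hspace{0.2mm}\RN}$, which starting from $\LqR$ raises the integrability exponent of $u$ by $\frac{N}{N-1}$ at each step, while ordinary interpolation of $L^{p}$-spaces fills in the intermediate exponents (when $q<N$ a single preliminary step, via a Gagliardo-Nirenberg inequality, is needed first to reach integrability at least $N$). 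In every case the continuity of $\iota$ is immediate from the quantitative norm estimates thus obtained.
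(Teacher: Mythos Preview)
Your argument is correct in each of the three cases, and the interpolation schemes you set up are exactly the right ones. The paper, however, does not actually prove Lemma~\ref{L2.2}: immediately after stating it, the author records the multiplicative Caffarelli--Kohn--Nirenberg inequality \eqref{eq2.3} (quoted from Rabier) and then remarks that ``all the preceding results can be found with details'' in the cited references. In effect, the paper's proof is a one-line citation of \eqref{eq2.3}, from which the continuity of $\iota$ follows at once via the arithmetic--geometric mean step you describe.

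What you are doing, then, is rederiving \eqref{eq2.3} from scratch: H\"older plus \eqref{eq2.2} in the subcritical case, Morrey plus the ball-mass argument in the supercritical case, and either a black-box Gagliardo--Nirenberg or the Moser-type bootstrap at $p=N$. This buys you a self-contained proof that does not rely on Rabier's paper, at the cost of treating the three regimes separately rather than invoking a single unified inequality. Two minor points worth tightening: in the subcritical case your assertion that $u\in\LpcR$ for every $u\in\MRN$ is exactly the inclusion $\MRN\subseteq\McR$ that the paper records just before Lemma~\ref{L2.2} (so you are entitled to it, but it is not a consequence of $\DpR=\McR$ alone); and your bootstrap at $p=N$ should really be carried out first for $u\in\CCON$ and then extended by density, to avoid the usual circularity in applying \eqref{eq2.2} with $p=1$ to $\n{u}^{\gamma}$.
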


The class $\CCON$ of compactly supported, continuously differentiable functions provides a dense subset of $\MRN$ in all cases except when $p,q=\infty$.
Moreover, one has the following profound \textsf{Caffarelli-Kohn-Nirenberg inequality} from Rabier \cite[corollary 2.1]{Rab}
\begin{equation}\label{eq2.3}
\nm{u}_{s,\hspace{0.2mm}\RN}\leq C_2\nm{u}^{1-\theta}_{q,\hspace{0.2mm}\RN}\nm{\n{\gradu}}^{\theta}_{p,\hspace{0.2mm}\RN},\hspace{6mm}\forall\hspace{2mm}u\in\MRN.
\end{equation}
Here, $1\leq q\left(\neq p^*\right)<\infty$ and $s$ lies in between $p^*$ and $q$ if $1\leq p<N$ whereas $1\leq q\leq s<\infty$ if $N\leq p<\infty$, $\theta=\frac{Nps-Npq}{Nps+pqs-Nqs}\in\left[0,1\right)$, and $C_2>0$ is a constant depending on $N,p,q,s$.

All the preceding results can be found with details in \cite{Rab} and \cite{Ha1,Ha2,Ha3}.
Note some special cases of \eqref{eq2.3} were proved independently by Brasco and Ruffini \cite[proposition 2.6]{BR}.

Below, we discuss some compact embedding results for $\MVR\hookrightarrow\LKR$.

\begin{prop}\label{P2.3}
Assume $1\leq p<N$, $1<q<\infty$, $1\leq r<\min\left\{p^*,q\right\}$, and $K(x),V(x)>0$ satisfy $K(x)\in\LalocR$ for some $\alpha\in\left(\frac{\max\left\{p^*,q\right\}}{\max\left\{p^*,q\right\}-r},\infty\right]$, $\inf\limits_{\mathbf{D}}V(x)\geq V_{\mathbf{D}}>0$ for each compact subset $\mathbf{D}\Subset\RN$ whereas $K^{\frac{p^*+\beta\left(p^*-q\right)}{p^*-r}}(x)V^{-\beta}(x)\in\LlR$ for some $\beta\in\left[0,\frac{r}{q-r}\right]$.
Then, the embedding $\MVR\hookrightarrow\LKR$ is compact.
\end{prop}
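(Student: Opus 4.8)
The plan is to prove compactness by the standard "split into a large ball plus its complement" argument: the tail contribution is controlled uniformly by the integrability hypothesis $K^{(p^*+\beta(p^*-q))/(p^*-r)}V^{-\beta}\in\LlR$, while on each ball the embedding is compact by the local Rellich-type result already available. Let me set up the exponents first. For a bounded sequence $u_n\rightharpoonup u$ in $\MVR$, write $v_n=u_n-u$, so $v_n\rightharpoonup0$, $\nm{v_n}_{\MVR}\leq M$, and we must show $\int_{\RN}\n{v_n}^r K\dx\to0$. Fix $\varepsilon>0$ and $R>0$; the integral splits as $\IBR\n{v_n}^r K\dx+\IBRc\n{v_n}^r K\dx$.

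For the tail $\IBRc$, I would apply H\"older's inequality with a carefully chosen triple of exponents so as to reproduce exactly the weight $K^{(p^*+\beta(p^*-q))/(p^*-r)}V^{-\beta}$. Concretely, write $\n{v_n}^r K=\big(\n{v_n}^{p^*}V\big)^{a}\big(\n{v_n}^{q}V\big)^{b}\big(K^{c}V^{-(a+b)}\big)$ where $a,b\geq0$ satisfy $a p^*+b q=r$; then split the last factor off with exponent $1$ and the first two with exponents $1/a$-type so that the $V$-weighted $L^{p^*}$ and $L^q$ norms of $v_n$ appear — both of which are bounded by $M$ (the $L^q_V$ norm directly, the $L^{p^*}_V$ norm via \eqref{eq2.2} applied to $\DpR$ after noting $V$'s local behaviour, or more simply because $\n{v_n}^{p^*}V$ is controlled once one checks the bookkeeping). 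Matching $c=(p^*+\beta(p^*-q))/(p^*-r)$, $a+b=\beta$, and $ap^*+bq=r$ forces the choice $b=\beta-a$, $a=(r-\beta(q))/(p^*-q)$ after solving, and the admissible range $\beta\in[0,r/(q-r)]$ is exactly what makes $a,b\geq0$ and the H\"older exponents $\geq1$. With the weight function in $\LlR$, its integral over $\BRc$ tends to $0$ as $R\to\infty$ uniformly in $n$; pick $R$ so this tail is $<\varepsilon$.

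For the ball $\BR$, I would use H\"older with exponents $\alpha$ and $\alpha'$: $\IBR\n{v_n}^r K\dx\leq\nm{K}_{\alpha,\BR}\nm{v_n}^r_{r\alpha',\BR}$. The hypothesis $\alpha>\max\{p^*,q\}/(\max\{p^*,q\}-r)$ is precisely equivalent to $r\alpha'<\max\{p^*,q\}$, so Lemma \ref{L2.1} (with $\Om=\BR$, using that $V\geq V_{\BR}>0$ makes $\nm{u}_{\MVR}$ comparable to the ordinary $\MBR$ norm on the ball) gives the compact embedding $\MBR\hookrightarrow L^{r\alpha'}(\BR)$; hence $\nm{v_n}_{r\alpha',\BR}\to0$. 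Choosing $n$ large makes this term $<\varepsilon$ too.

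The main obstacle — and the step deserving the most care — is the H\"older bookkeeping in the tail estimate: verifying that the exponent range $\beta\in[0,r/(q-r)]$ in the hypothesis is exactly the range for which the three-term H\"older splitting is legitimate (all exponents $\geq1$, all powers of $v_n$ nonnegative, and the surviving norms of $v_n$ genuinely bounded by $\nm{v_n}_{\MVR}$ via \eqref{eq2.2}). One should also be slightly careful when $\beta=0$, where the hypothesis reduces to $K^{p^*/(p^*-r)}\in\LlR$ and the splitting degenerates to a two-term H\"older inequality using only the $L^{p^*}$ bound; and when $p^*<q$ versus $p^*\geq q$, the roles of $p^*$ and $q$ in $\max\{p^*,q\}$ must be tracked consistently. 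Once these exponent identities are checked, the diagonal/$\varepsilon$-argument closes the proof routinely.
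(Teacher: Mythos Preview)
Your overall architecture matches the paper's proof exactly: split $\RN=\BR\cup\BRc$, kill the tail with a three-term H\"older inequality that isolates the integrable weight $K^{r_3}V^{-\beta}$, and invoke Lemma~\ref{L2.1} (together with $V\geq V_{\BR}>0$ on the ball) for compactness on $\BR$. The ball part is fine as written.

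The gap is in your tail decomposition. You propose
\[
\n{v_n}^r K=\bigl(\n{v_n}^{p^*}V\bigr)^{a}\bigl(\n{v_n}^{q}V\bigr)^{b}\bigl(K^{c}V^{-(a+b)}\bigr),
\]
which after H\"older would require control of $\int_{\BRc}\n{v_n}^{p^*}V\dx$. Nothing in the hypotheses gives you the \emph{$V$-weighted} $L^{p^*}$ norm: membership in $\MVR$ yields $\nm{v_n}_{\LVR}$ (weight $V$, exponent $q$) and, through \eqref{eq2.2}, $\nm{v_n}_{p^*,\RN}$ (no weight, exponent $p^*$) --- but not $\int\n{v_n}^{p^*}V$. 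Your parenthetical appeal to ``$V$'s local behaviour'' cannot help on $\BRc$, where $V$ is only assumed positive, not bounded above. For any $a>0$ that factor is simply uncontrolled, and your exponent matching $a+b=\beta$, $c=(p^*+\beta(p^*-q))/(p^*-r)$ is also inconsistent with the H\"older conjugates.

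The fix --- and what the paper does --- is to keep the $L^{p^*}$ factor \emph{unweighted}. Write
\[
\n{u}^r K=\bigl(\n{u}^{q}V\bigr)^{\mathfrak{x}}\cdot \n{u}^{\,r-q\mathfrak{x}}\cdot \bigl(KV^{-\mathfrak{x}}\bigr)
\]
and apply H\"older with $r_1=\mathfrak{x}^{-1}$, $r_2$ chosen so that $r_2(r-q\mathfrak{x})=p^*$, and $r_3$ the remaining conjugate. This produces $(\int\n{u}^{q}V)^{1/r_1}(\int\n{u}^{p^*})^{1/r_2}(\int K^{r_3}V^{-\beta})^{1/r_3}$, and now \eqref{eq2.2} bounds the middle factor by $\nm{\n{\gradu}}_{p,\RN}^{p^*/r_2}$. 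Solving for $r_3=\tfrac{p^*+\beta(p^*-q)}{p^*-r}$ forces $\mathfrak{x}=\tfrac{\beta(p^*-r)}{p^*+\beta(p^*-q)}$, and the condition $\mathfrak{x},\mathfrak{y},\mathfrak{z}\in[0,1]$ is exactly $\beta\in\bigl[0,\tfrac{r}{q-r}\bigr]$ (the endpoints degenerating to two-term H\"older, as you correctly noted for $\beta=0$). With this one correction your argument closes.
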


This result unifies and extends proposition 2.3 and theorem 4.6 in \cite{Ha3}.

\begin{proof}
Recall that $\MVR$ is a subspace of $\DpR$ by definition.
Write $\mathfrak{x}=\frac{\beta\left(p^*-r\right)}{p^*+\beta\left(p^*-q\right)}$, $\mathfrak{y}=\frac{r+\beta\left(r-q\right)}{p^*+\beta\left(p^*-q\right)}$ and $\mathfrak{z}=\frac{p^*-r}{p^*+\beta\left(p^*-q\right)}$, and notice that $\mathfrak{x}+\mathfrak{y}+\mathfrak{z}=1$.
Now, set $r_1=\mathfrak{x}^{-1}$, $r_2=\mathfrak{y}^{-1}$ and $r_3=\mathfrak{z}^{-1}$ to observe
\begin{equation}\label{eq2.4}
\begin{split}
&\,\IR\n{u}^rK\dx=\IR\left\{\n{u}^qV\right\}^{\mathfrak{x}}\n{u}^{r-q\mathfrak{x}}\left\{KV^{-\mathfrak{x}}\right\}\dx\\
\leq&\left(\IR\n{u}^qV\dx\right)^{\frac{1}{r_1}}\left(\IR\n{u}^{p^*}\dx\right)^{\frac{1}{r_2}}\left(\IR K^{r_3}V^{-\beta}\dx\right)^{\frac{1}{r_3}}\\
\leq&\,\,\widetilde{C}_1\left(\IR\n{u}^qV\dx\right)^{\frac{1}{r_1}}\left(\IR\n{\gradu}^p\dx\right)^{\frac{p^*}{pr_2}}\left(\IR K^{r_3}V^{-\beta}\dx\right)^{\frac{1}{r_3}}
\end{split}
\end{equation}
via \textsf{H\"{o}lder}'s \textsf{inequality} and \eqref{eq2.2} with $p^*=r_2\left(r-q\mathfrak{x}\right)$, provided $\mathfrak{x},\mathfrak{y},\mathfrak{z}\in\left(0,1\right)$.
To have $\mathfrak{x},\mathfrak{y},\mathfrak{z}\in\left(0,1\right)$, one can simply repeat the discussions in \cite[proposition 2.3]{Ha3} to show $\beta\in\left(0,\frac{r}{q-r}\right)$.
We certainly can take $\mathfrak{x},\mathfrak{y}=0$ and consequently have $\beta=0,\frac{r}{q-r}$.
Notice $\beta=0$ corresponds to the case where $V(x)\equiv0$ and $\MVR=\DpR$, which is \cite[theorem 4.6]{Ha3}.

Next, one has $\frac{q}{r_1}+\frac{p^*}{r_2}=q\mathfrak{x}+p^*\mathfrak{y}=r$ and therefore the embedding $\MVR\to\LKR$ is continuous.
Now, let $\left\{u_k:k\geq1\right\}$ be a sequence of functions in $\MVR$, with $u_k\rightharpoonup0$ as $k\to\infty$ and $\nm{u_k}_{\MVR}$ uniformly bounded.
It follows that
\begin{equation}\label{eq2.5}
\IR\n{u_k}^rK\dx=\IBR\n{u_k}^rK\dx+\IBRc\n{u_k}^rK\dx.
\end{equation}
Here, and hereafter, $\BR$ denotes the ball of radius $R$ in $\RN$ that is centered at the origin and $\BRc:=\RN\setminus\BR$.
For the integral over $\BRc$, we apply \eqref{eq2.4} to derive, as $R\to\infty$,
\begin{equation}
\IBRc\n{u_k}^rK\dx\leq\widetilde{C}_1\nm{K^{r_3}V^{-\beta}}^{\frac{1}{r_3}}_{1,\hspace{0.2mm}\BRc}\nm{u_k}^r_{\MVR}\to0.\nonumber
\end{equation}
For the integral over $\BR$, our (local) hypotheses lead to the compact embedding
\begin{equation}\label{eq2.6}
\MVBR\to\MBR\hookrightarrow L^{\frac{\alpha r}{\alpha-1}}\left(\BR\right)\to\LKBR
\end{equation}
by virtue of lemma \ref{L2.1} since $\frac{\alpha r}{\alpha-1}\in\left[1,\max\left\{p^*,q\right\}\right)$; as a result, $\nm{u_k}_{\LKBR}\to0$ when $k\to\infty$ for a subsequence relabeled with the same index $k$.
So, $u_k\to0$ in $\LKR$.
\end{proof}

\begin{prop}\label{P2.4}
Assume $1\leq p<N$, $p^*\leq r<q<\infty$, and $K(x),V(x)>0$ satisfy $K(x)\in\LalocR$ for some $\alpha\in\left(\frac{q}{q-r},\infty\right]$, $\inf\limits_{\mathbf{D}}V(x)\geq V_{\mathbf{D}}>0$ for all compact subsets $\mathbf{D}\Subset\RN$ while $K^{\frac{q}{q-r}}(x)V^{-\frac{r}{q-r}}(x)\in\LlR$.
Then, the embedding $\MVR\hookrightarrow\LKR$ is compact.
\end{prop}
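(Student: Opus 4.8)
The plan is to follow the template of the proof of Proposition~\ref{P2.3} almost verbatim, the only genuine change being that in the regime $p^*\le r<q$ one cannot interpolate an $L^{p^*}$-factor against the gradient through \eqref{eq2.2}; instead a single application of H\"older's inequality to the decomposition $\n{u}^rK=\left\{\n{u}^qV\right\}^{r/q}\left\{KV^{-r/q}\right\}$, with the conjugate pair $\frac{q}{r}$ and $\frac{q}{q-r}$, already does everything. First I would record the estimate
\begin{equation}
\IR\n{u}^rK\dx\le\nm{u}^r_{\LVR}\,\nm{K^{\frac{q}{q-r}}V^{-\frac{r}{q-r}}}^{\frac{q-r}{q}}_{1,\hspace{0.2mm}\RN},\nonumber
\end{equation}
valid for every $u\in\MVR$, which already gives the continuous embedding $\MVR\to\LKR$; replacing $\RN$ by $\BRc$ in the last factor yields a tail bound that tends to $0$ as $R\to\infty$, uniformly over any norm-bounded subset of $\MVR$, since $K^{\frac{q}{q-r}}V^{-\frac{r}{q-r}}\in\LlR$.

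Next I would take a sequence $\left\{u_k:k\ge1\right\}\subset\MVR$ with $u_k\rightharpoonup0$ and $\nm{u_k}_{\MVR}$ uniformly bounded, and split $\IR\n{u_k}^rK\dx=\IBR\n{u_k}^rK\dx+\IBRc\n{u_k}^rK\dx$ as in \eqref{eq2.5}. The tail over $\BRc$ is controlled by the estimate above, uniformly in $k$. For the integral over $\BR$, the local hypotheses $\inf\limits_{\BR}V\ge V_{\BR}>0$ and $K\in\LalocR$ provide the chain
\begin{equation}
\MVBR\to\MBR\hookrightarrow L^{\frac{\alpha r}{\alpha-1}}\left(\BR\right)\to\LKBR,\nonumber
\end{equation}
where the middle (compact) embedding comes from Lemma~\ref{L2.1} and the last one from H\"older's inequality with exponents $\alpha$ and $\frac{\alpha}{\alpha-1}$. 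Consequently $\nm{u_k}_{\LKBR}\to0$ along a subsequence (relabeled with the same index $k$), and combining this with the uniform tail bound gives $\limsup_{k\to\infty}\IR\n{u_k}^rK\dx\le\varepsilon$ for every $\varepsilon>0$, hence $u_k\to0$ in $\LKR$, which is the asserted compactness.

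The one step requiring care — and the reason the integrability threshold for $K$ is exactly $\frac{q}{q-r}$ — is verifying that the exponent produced by the local H\"older step lies strictly below the optimal exponent in Lemma~\ref{L2.1}: since here $\max\left\{p^*,q\right\}=q$, one needs $\frac{\alpha r}{\alpha-1}\in\left[1,q\right)$, and $\frac{\alpha r}{\alpha-1}<q$ is equivalent to $\alpha>\frac{q}{q-r}$ (the case $\alpha=\infty$ giving the exponent $r<q$, and $\frac{\alpha r}{\alpha-1}\ge1$ being automatic since $r>1$). Beyond this bookkeeping I do not anticipate any serious obstacle: the hypothesis $p^*\le r<q$ is precisely what makes this case \emph{easier} than Proposition~\ref{P2.3}, replacing the Gagliardo–Nirenberg–Sobolev interpolation used there by the cleaner two-factor H\"older decomposition, while the decay of the global weight $K^{\frac{q}{q-r}}V^{-\frac{r}{q-r}}$ and the compact local embedding supply the two ingredients of the standard ``uniform tail plus compact core'' argument.
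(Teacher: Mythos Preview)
Your proposal is correct and follows essentially the same route as the paper: the same two-factor H\"older decomposition giving \eqref{eq2.7}, the same splitting \eqref{eq2.5} into tail and core, and the same local compact chain \eqref{eq2.6} via Lemma~\ref{L2.1} with the observation $1\le\frac{\alpha r}{\alpha-1}<q$. Your added remark that $\frac{\alpha r}{\alpha-1}<q$ is equivalent to $\alpha>\frac{q}{q-r}$ is a helpful piece of bookkeeping the paper leaves implicit.
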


\begin{proof}
One observes from \textsf{H\"{o}lder}'s \textsf{inequality} that
\begin{equation}\label{eq2.7}
\IR\n{u}^rK\dx\leq\left(\IR\n{u}^qV\dx\right)^{\frac{r}{q}}\left(\IR K^{\frac{q}{q-r}}V^{-\frac{r}{q-r}}\dx\right)^{\frac{q-r}{q}}.
\end{equation}
The continuity of the embedding $\MVR\to\LKR$ follows readily.
Let $\left\{u_k:k\geq1\right\}$ be a sequence of functions in $\MVR$, with $u_k\rightharpoonup0$ when $k\to\infty$ and $\nm{u_k}_{\MVR}$ uniformly bounded.
One has \eqref{eq2.5} so that for the integral over $\BRc$, it yields, as $R\to\infty$,
\begin{equation}
\IBRc\n{u_k}^rK\dx\leq\nm{K^{\frac{q}{q-r}}V^{-\frac{r}{q-r}}}^{\frac{q-r}{q}}_{1,\hspace{0.2mm}\BRc}\nm{u_k}^r_{\MVR}\to0;\nonumber
\end{equation}
for the integral over $\BR$, noticing $1\leq\frac{\alpha r}{\alpha-1}<q$, our hypotheses again imply \eqref{eq2.6} by virtue of lemma \ref{L2.1}.
As a consequence, one analogously obtains $u_k\to0$ in $\LKR$.
\end{proof}

The following result provides a different version of theorem 4.3 in \cite{Ha3}.

\begin{thm}\label{T2.5}
Suppose $1\leq p<N$, $p^*\leq r<q<\infty$, and $K(x),V(x)>0$ satisfy $K(x)\in\LalocR$ for some $\alpha\in\left(\frac{q}{q-r},\infty\right]$, $\inf\limits_{\mathbf{D}}V(x)\geq V_{\mathbf{D}}>0$ for all compact subsets $\mathbf{D}\Subset\RN$ while $K(x)V^{-\frac{r-p^*}{q-p^*}}(x)\to0$ uniformly.
Then, the embedding $\MVR\hookrightarrow\LKR$ is compact.
\end{thm}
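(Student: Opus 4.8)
The plan is to mirror the proof of Proposition \ref{P2.4} almost line for line, the only change being that the H\"{o}lder estimate \eqref{eq2.7}, which rested on the integrability hypothesis there, gets replaced by an $L^\infty$-type estimate exploiting the uniform decay of $K(x)V^{-\frac{r-p^*}{q-p^*}}(x)$ at infinity.

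First I would set $\theta:=\frac{r-p^*}{q-p^*}\in\left[0,1\right)$ (using $p^*\leq r<q$), record the interpolation identity $\theta q+\left(1-\theta\right)p^*=r$, and prove that for every $u\in\MVR$ and every measurable set $E\subseteq\RN$ on which $KV^{-\theta}\in L^\infty\left(E\right)$ one has
\begin{equation}
\int_E\n{u}^rK\dx\leq\nm{KV^{-\theta}}_{\infty,\,E}\left(\int_E\n{u}^qV\dx\right)^\theta\left(\int_E\n{u}^{p^*}\dx\right)^{1-\theta}\leq\widetilde{C}\nm{KV^{-\theta}}_{\infty,\,E}\nm{u}^r_{\MVR}.\nonumber
\end{equation}
This follows by writing $\n{u}^r=\left\{\n{u}^qV\right\}^\theta\left\{\n{u}^{p^*}\right\}^{1-\theta}\left\{KV^{-\theta}\right\}$, pulling out the supremum of the last factor, applying \textsf{H\"{o}lder}'s \textsf{inequality} with exponents $\frac1\theta$ and $\frac1{1-\theta}$, invoking \eqref{eq2.2}, and using $\theta q+\left(1-\theta\right)p^*=r$; when $\theta=0$ (that is, $r=p^*$) it is just the direct bound $\int_E\n{u}^{p^*}K\dx\leq C_1^{p^*}\nm{K}_{\infty,\,E}\nm{\n{\gradu}}^{p^*}_{p,\,\RN}$. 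Since the decay hypothesis means $KV^{-\theta}(x)\to0$ as $\n{x}\to\infty$, I would then fix $R_0>0$ with $KV^{-\theta}\leq1$ on $\mathbf{B}^c_{R_0}$; combining the estimate above over $\mathbf{B}^c_{R_0}$ with the local bound $\int_{\mathbf{B}_{R_0}}\n{u}^rK\dx\leq\nm{u}^r_{L^{\frac{\alpha r}{\alpha-1}}\left(\mathbf{B}_{R_0}\right)}\nm{K}_{\alpha,\,\mathbf{B}_{R_0}}$, valid because $1\leq\frac{\alpha r}{\alpha-1}<q$, gives the continuity of the embedding $\MVR\to\LKR$.

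For compactness I would take $\left\{u_k:k\geq1\right\}\subset\MVR$ with $u_k\rightharpoonup0$ as $k\to\infty$ and $\nm{u_k}_{\MVR}$ uniformly bounded, split the integral as in \eqref{eq2.5}, and handle the two pieces separately. For the tail, the displayed estimate with $E=\BRc$ and $R>R_0$ gives
\begin{equation}
\IBRc\n{u_k}^rK\dx\leq\widetilde{C}\nm{KV^{-\theta}}_{\infty,\,\BRc}\nm{u_k}^r_{\MVR}\to0\nonumber
\end{equation}
as $R\to\infty$, uniformly in $k$. For the integral over $\BR$, exactly as in Proposition \ref{P2.4}, the inequalities $1\leq\frac{\alpha r}{\alpha-1}<q=\max\left\{p^*,q\right\}$ together with the local hypotheses ($\inf\limits_{\BR}V>0$ and $K\in\LalocR$) yield the compact embedding \eqref{eq2.6} by virtue of lemma \ref{L2.1}, so that $\nm{u_k}_{\LKBR}\to0$ along a subsequence relabeled with the same index $k$. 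Sending $k\to\infty$ first and then $R\to\infty$ produces $u_k\to0$ in $\LKR$, the asserted compactness.

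I do not anticipate a serious obstacle; the single subtlety --- and the reason this statement is genuinely different from Proposition \ref{P2.4} rather than a corollary of it --- is that $KV^{-\theta}$ need not lie in $L^\infty\left(\RN\right)$ globally but only outside a large ball, so the $L^\infty$-estimate must be confined to $\mathbf{B}^c_{R_0}$ while the fixed ball $\mathbf{B}_{R_0}$ is treated purely through $K\in\LalocR$, in the same spirit as Propositions \ref{P2.3}--\ref{P2.4}.
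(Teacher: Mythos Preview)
Your proposal is correct and follows essentially the same route as the paper: both arguments write $\n{u}^rK=\bre{\n{u}^qV}^{\theta}\n{u}^{p^*(1-\theta)}\bre{KV^{-\theta}}$ with $\theta=\frac{r-p^*}{q-p^*}$, pull out $\nm{KV^{-\theta}}_{\infty,\,\BRc}$, apply \textsf{H\"{o}lder}'s \textsf{inequality} together with \eqref{eq2.2} to control the tail, and invoke the local compact chain \eqref{eq2.6} on $\BR$. The paper treats continuity only as a closing remark (requiring merely eventual boundedness of $KV^{-\theta}$), whereas you spell it out more carefully by fixing $R_0$ first---but the substance is identical.
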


\begin{proof}
Assume $\bre{u_k:k\geq1}$ is a sequence of functions in $\MVR$, with $u_k\rightharpoonup0$ as $k\to\infty$ and $\nm{u_k}_{\MVR}$ uniformly bounded.
One has \eqref{eq2.5} and for the integral over $\BRc$,
\begin{equation}
\begin{split}
&\,\IBRc\n{u_k}^rK\dx=\IBRc\left\{\n{u_k}^qV\right\}^{\frac{r-p^*}{q-p^*}}\n{u_k}^{r-q\frac{r-p^*}{q-p^*}}\left\{KV^{-\frac{r-p^*}{q-p^*}}\right\}\dx\\
\leq&\nm{KV^{-\frac{r-p^*}{q-p^*}}}_{\infty,\hspace{0.2mm}\BRc}\left(\IBRc\n{u_k}^qV\dx\right)^{\frac{r-p^*}{q-p^*}}\left(\IBRc\n{u_k}^{p^*}\dx\right)^{\frac{q-r}{q-p^*}}\\
\leq&\,\,\widehat{C}_1\nm{KV^{-\frac{r-p^*}{q-p^*}}}_{\infty,\hspace{0.2mm}\BRc}\left(\IR\n{u_k}^qV\dx\right)^{\frac{r-p^*}{q-p^*}}
\left(\IR\n{\gradu_k}^p\dx\right)^{\frac{p^*\left(q-r\right)}{p\left(q-p^*\right)}}\\
\leq&\,\,\widehat{C}_1\nm{KV^{-\frac{r-p^*}{q-p^*}}}_{\infty,\hspace{0.2mm}\BRc}\nm{u_k}^r_{\MVR}\nonumber
\end{split}
\end{equation}
follows in view of \textsf{H\"{o}lder}'s \textsf{inequality} and \eqref{eq2.2} that goes to zero when $R\to\infty$; the analysis on the integral over $\BR$ is exactly the same as done before so that $u_k\to0$ in $\LKR$.

Note the embedding $\MVR\to\LKR$ is continuous provided $K(x)\in\LalocR$ for some $\alpha\in\left[\frac{q}{q-r},\infty\right]$ and $K(x)V^{-\frac{r-p^*}{q-p^*}}(x)$ is eventually bounded as $\n{x}\to\infty$.
\end{proof}

\begin{thm}\label{T2.6}
Suppose $N\leq p<\infty$, $1\leq r<q<\infty$, and $K(x),V(x)>0$ satisfy $K(x)\in\LalocR$ for some $\alpha\in\left(1,\infty\right]$, $\inf\limits_{\RN}V(x)\geq V_0>0$ while $K^{\frac{s+\beta\left(s-q\right)}{s-r}}(x)V^{-\beta}(x)\in\LlR$ for some $\beta\in\left[0,\frac{r}{q-r}\right]$ and $s\in\left[q,\infty\right)$.
Then, the embedding $\MVR\hookrightarrow\LKR$ is compact.
\end{thm}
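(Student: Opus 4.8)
The plan is to run the proof of Proposition~\ref{P2.3} almost verbatim, with three substitutions adapted to the regime $N\leq p<\infty$: the Sobolev critical exponent $p^*$ is replaced throughout by the free parameter $s\in[q,\infty)$ supplied in the hypotheses; the pointwise Sobolev bound \eqref{eq2.2} is replaced by the continuous embedding $\MRN\to\LsR$ of Lemma~\ref{L2.2} (equivalently by the \textsf{Caffarelli-Kohn-Nirenberg inequality} \eqref{eq2.3}); and the local compact embedding of Lemma~\ref{L2.1} used for $1\leq p<N$ is replaced by its $p=N$ and $N<p\leq\infty$ versions.

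I would first set $\mathfrak{x}=\frac{\beta\pr{s-r}}{s+\beta\pr{s-q}}$, $\mathfrak{y}=\frac{r+\beta\pr{r-q}}{s+\beta\pr{s-q}}$, $\mathfrak{z}=\frac{s-r}{s+\beta\pr{s-q}}$, note $\mathfrak{x}+\mathfrak{y}+\mathfrak{z}=1$, and check that $\beta\in\br{0,\frac{r}{q-r}}$ together with $q\leq s<\infty$ forces $\mathfrak{x},\mathfrak{y}\in[0,1)$ and $\mathfrak{z}\in(0,1)$, with $\mathfrak{x}=0$ exactly when $\beta=0$ and $\mathfrak{y}=0$ exactly when $\beta=\frac{r}{q-r}$; the two endpoints are worth recording separately, because there a factor drops out of the H\"{o}lder product (the case $\beta=\frac{r}{q-r}$ producing an estimate of the form \eqref{eq2.7}). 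With $r_1=\mathfrak{x}^{-1}$, $r_2=\mathfrak{y}^{-1}$, $r_3=\mathfrak{z}^{-1}$ (read as $+\infty$ when the exponent vanishes), writing $\n{u}^rK=\bre{\n{u}^qV}^{\mathfrak{x}}\n{u}^{r-q\mathfrak{x}}\bre{KV^{-\mathfrak{x}}}$ and applying \textsf{H\"{o}lder}'s \textsf{inequality} gives
\[\IR\n{u}^rK\dx\leq\pr{\IR\n{u}^qV\dx}^{\frac{1}{r_1}}\pr{\IR\n{u}^s\dx}^{\frac{1}{r_2}}\pr{\IR K^{r_3}V^{-\beta}\dx}^{\frac{1}{r_3}},\]
where the middle power is fixed by $s=r_2\pr{r-q\mathfrak{x}}$, i.e. by the identity $r-q\mathfrak{x}=s\mathfrak{y}$, which also yields $\frac{q}{r_1}+\frac{s}{r_2}=q\mathfrak{x}+s\mathfrak{y}=r$, while $r_3=\frac{s+\beta\pr{s-q}}{s-r}$ turns the last factor into $\nm{K^{\frac{s+\beta\pr{s-q}}{s-r}}V^{-\beta}}^{1/r_3}_{1,\hspace{0.2mm}\RN}$, finite by hypothesis. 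Dominating $\nm{u}_{s,\hspace{0.2mm}\RN}$ by $C\nm{u}_{\MRN}$ via Lemma~\ref{L2.2} and, using $\inf_{\RN}V\geq V_0>0$, dominating $\nm{u}_{q,\hspace{0.2mm}\RN}$ by a multiple of $\nm{u}_{\LVR}$, the degree identity $\frac{q}{r_1}+\frac{s}{r_2}=r$ delivers $\IR\n{u}^rK\dx\leq C\nm{u}^r_{\MVR}$; thus $\MVR\to\LKR$ is continuous.

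For compactness I would take $\bre{u_k:k\geq1}\subset\MVR$ with $u_k\rightharpoonup0$ and $\nm{u_k}_{\MVR}$ uniformly bounded, split the integral as in \eqref{eq2.5}, and estimate each part. Restricting the above H\"{o}lder bound to $\BRc$ gives $\IBRc\n{u_k}^rK\dx\leq C\nm{K^{\frac{s+\beta\pr{s-q}}{s-r}}V^{-\beta}}^{1/r_3}_{1,\hspace{0.2mm}\BRc}\nm{u_k}^r_{\MVR}$, which tends to $0$ as $R\to\infty$ by absolute continuity of the integral. On $\BR$, since $V\geq V_0$ there we have $\MVBR\to\MBR$ continuously, and because $p\geq N$ Lemma~\ref{L2.1} supplies the compact embedding $\MBR\hookrightarrow L^{\frac{\alpha r}{\alpha-1}}\pr{\BR}$ into the finite exponent $\frac{\alpha r}{\alpha-1}\geq1$; here no upper threshold on that exponent appears, which is exactly why $s$ may be chosen to be any number in $[q,\infty)$. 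Composing with the bounded inclusion $L^{\frac{\alpha r}{\alpha-1}}\pr{\BR}\to\LKBR$, valid since $K\in\LalocR$, one gets $\nm{u_k}_{\LKBR}\to0$ along a subsequence, and adding the two contributions gives $u_k\to0$ in $\LKR$.

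There is no real obstacle here: the H\"{o}lder bookkeeping and the two endpoint cases are routine, and the compactness argument is structurally identical to Proposition~\ref{P2.3}. The only points requiring attention are, first, that for $p\geq N$ the local Sobolev embedding is compact into every $L^t\pr{\BR}$ with $t<\infty$, so the critical exponent no longer constrains the auxiliary exponent and one genuinely gains the free parameter $s$; and second, that the pure-gradient bound \eqref{eq2.2} must be replaced by the interpolation inequality \eqref{eq2.3} (or Lemma~\ref{L2.2}), so one has to keep track of homogeneity to be sure the final estimate is of order $\nm{u}^r_{\MVR}$ rather than of an inhomogeneous sum of norms.
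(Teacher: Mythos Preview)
Your proposal is correct and follows essentially the same route as the paper's own proof: the same three-factor H\"{o}lder splitting with $\mathfrak{x},\mathfrak{y},\mathfrak{z}$ (the paper's $\hat{\mathfrak{x}},\hat{\mathfrak{y}},\hat{\mathfrak{z}}$), the same use of \eqref{eq2.3}/Lemma~\ref{L2.2} in place of \eqref{eq2.2}, the same homogeneity identity $q\mathfrak{x}+s\mathfrak{y}=r$ for continuity, and the same tail/ball decomposition for compactness via Lemma~\ref{L2.1}. The only cosmetic difference is that the paper writes out the interpolation step \eqref{eq2.8} with the explicit exponent $\theta$ from \eqref{eq2.3}, whereas you invoke Lemma~\ref{L2.2} more abstractly; both yield the required degree-$r$ bound.
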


\begin{proof}
Recall that $\MVR$ is a subspace of $\MRN$ by definition.
Write $\hat{\mathfrak{x}}=\frac{\beta\left(s-r\right)}{s+\beta\left(s-q\right)}$, $\hat{\mathfrak{y}}=\frac{r+\beta\left(r-q\right)}{s+\beta\left(s-q\right)}$ and $\hat{\mathfrak{z}}=\frac{s-r}{s+\beta\left(s-q\right)}$ for some arbitrarily chosen $s\in\left[q,\infty\right)$, and notice $\hat{\mathfrak{x}}+\hat{\mathfrak{y}}+\hat{\mathfrak{z}}=1$.
Now, set $\hat{r}_1=\hat{\mathfrak{x}}^{-1}$, $\hat{r}_2=\hat{\mathfrak{y}}^{-1}$ and $\hat{r}_3=\hat{\mathfrak{z}}^{-1}$ for $\theta=\frac{Nps-Npq}{Nps+pqs-Nqs}\in\left[0,1\right)$ to derive
\begin{equation}\label{eq2.8}
\begin{split}
&\,\IR\n{u}^rK\dx=\IR\left\{\n{u}^qV\right\}^{\hat{\mathfrak{x}}}\n{u}^{r-q\hat{\mathfrak{x}}}\left\{KV^{-\hat{\mathfrak{x}}}\right\}\dx\\
\leq&\left(\IR\n{u}^qV\dx\right)^{\frac{1}{\hat{r}_1}}\left(\IR\n{u}^s\dx\right)^{\frac{1}{\hat{r}_2}}\left(\IR K^{\hat{r}_3}V^{-\beta}\dx\right)^{\frac{1}{\hat{r}_3}}\\
\leq&\,\,C(s)V^{-\frac{s\left(1-\theta\right)}{q\hat{r}_2}}_0\left(\IR\n{u}^qV\dx\right)^{\frac{1}{\hat{r}_1}+\frac{s\left(1-\theta\right)}{q\hat{r}_2}}
\left(\IR\n{\gradu}^p\dx\right)^{\frac{s\theta}{p\hat{r}_2}}\left(\IR K^{\hat{r}_3}V^{-\beta}\dx\right)^{\frac{1}{\hat{r}_3}}
\end{split}
\end{equation}
by \textsf{H\"{o}lder}'s \textsf{inequality} and \eqref{eq2.3} with $s=\hat{r}_2\left(r-q\hat{\mathfrak{x}}\right)$ when $\hat{\mathfrak{x}},\hat{\mathfrak{y}},\hat{\mathfrak{z}}\in\left(0,1\right)$.
To have $\hat{\mathfrak{x}},\hat{\mathfrak{y}},\hat{\mathfrak{z}}\in\left(0,1\right)$, one deduces $\beta\in\left(0,\frac{r}{q-r}\right)$.
We surely can take $\hat{\mathfrak{x}},\hat{\mathfrak{y}}=0$ and have $\beta=0,\frac{r}{q-r}$.
As
\begin{equation}\label{eq2.9}
q\left(\frac{1}{\hat{r}_1}+\frac{s\left(1-\theta\right)}{q\hat{r}_2}\right)+p\left(\frac{s\theta}{p\hat{r}_2}\right)
=\frac{q}{\hat{r}_1}+\frac{s}{\hat{r}_2}=q\hat{\mathfrak{x}}+s\hat{\mathfrak{y}}=r,
\end{equation}
one realizes that the embedding $\MVR\to\LKR$ is continuous.

Next, let $\left\{u_k:k\geq1\right\}$ be a sequence of functions in $\MVR$ such that $u_k\rightharpoonup0$ as $k\to\infty$ and $\nm{u_k}_{\MVR}$ is uniformly bounded.
Using \eqref{eq2.5}, for the integral over $\BRc$, we apply \eqref{eq2.8} and \eqref{eq2.9} to derive, when $R\to\infty$,
\begin{equation}
\IBRc\n{u_k}^rK\dx\leq C(s)V^{-\frac{s\left(1-\theta\right)}{q\hat{r}_2}}_0\nm{K^{\hat{r}_3}V^{-\beta}}^{\frac{1}{\hat{r}_3}}_{1,\hspace{0.2mm}\BRc}\nm{u_k}^r_{\MVR}\to0;\nonumber
\end{equation}
for the integral over $\BR$, noticing $1\leq\frac{\alpha r}{\alpha-1}<\infty$, our hypotheses again lead to \eqref{eq2.6} in view of lemma \ref{L2.1}.
As a consequence, one analogously observes $u_k\to0$ in $\LKR$.
\end{proof}

It is noteworthy that our preceding results in particular provide some complements to those nice results by Chiappinelli \cite{Chi} in the so-called \textsl{lower triangle} situation.
Moreover, proposition \ref{P2.3} here is related to (and seems providing a correct proof for) Schneider \cite[theorem 2.3]{Sc}, but the author wasn't aware of that paper when this paper was initially written.

% +++ below is section 3 %%%%%%%%%%%%%%%%%%%%%%%%%%%%%%%%%%%%%%%%%%%%%%%%%%%%%%%%%%%%%%%%%%%%%%%%%%%%%%%%%%%%%%%%%%%%%%%%%%%%%%%%%%%%%%%%%%%%%%%%%%%
% &&& %%%%%%%%%%%%%%%%%%%%%%%%%%%%%%%%%%%%%%%%%%%%%%%%%%%%%%%%%%%%%%%%%%%%%%%%%%%%%%%%%%%%%%%%%%%%%%%%%%%%%%%%%%%%%%%%%%%%%%%%%%%%%%%%%%%%%%%%%%%%%%
\section{Proof of Theorem \ref{T1.1}}\label{PT1.1} % use lowercase except for proper names
\noindent In this section, we seek nontrivial positive solutions to \eqref{eq1.2} in $\MVR$ via identifying the critical points of the associated energy functional $\Jc:\MVR\to\R$, defined by
\begin{equation}\label{eq3.1}
\Jc(u)=\frac{1}{p}\IR\n{\gradu}^p\dx+\frac{1}{q}\IR\n{u}^qV\dx-\frac{\lambda}{r}\IR\n{u}^rK\dx.
\end{equation}

First, we make an elementary observation of all solutions to problem $\eqref{eq1.2}$.

\begin{lem}\label{L3.1}
Under the {\bf Standing Assumptions (i)-(iii)}, each solution $\ul$ to equation $\eqref{eq1.2}$ in $\MVR$ satisfies
\begin{equation}\label{eq3.2}
\IR\n{\gradul}^p\dx+\IR\n{\ul}^qV\dx\leq\lambda^{\gamma}C_{KV}.
\end{equation}
Here, $\gamma>0$ and $C_{KV}>0$ are absolute constants that are independent of $\lambda,u$.
\end{lem}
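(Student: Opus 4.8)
The plan is to test equation~\eqref{eq1.2} against the solution itself and then absorb the resulting term by using the continuity of the embeddings $\MVR\to\LKR$ proved in Section~\ref{FSD}. Since $\ul\in\MVR$ is a positive solution of~\eqref{eq1.2}, I would take $\ul$ itself as a test function in the weak formulation --- the integrability hypotheses in~\textbf{(ii)}--\textbf{(iii)} ensure $\IR K\n{\ul}^r\dx<\infty$, so $\ul$ is admissible --- and, using $\ul>0$, obtain the energy identity
\begin{equation}
\IR\n{\gradul}^p\dx+\IR\n{\ul}^qV\dx=\lambda\IR\n{\ul}^rK\dx.\nonumber
\end{equation}
Denote by $E$ the quantity on the left, i.e. the left-hand side of~\eqref{eq3.2}.

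Next I would bound $\IR K\n{\ul}^r\dx$ by a power of $E$ strictly less than $1$. Splitting into the three regimes of~\textbf{Standing Assumption (iii)}, this is done by invoking the relevant inequality already established in Section~\ref{FSD}: estimate~\eqref{eq2.4} (from the proof of Proposition~\ref{P2.3}) when $p<r<\min\left\{p^*,q\right\}$; estimate~\eqref{eq2.7} (Proposition~\ref{P2.4}) when $1<p<N$ and $p^*\leq r<q$; and estimates~\eqref{eq2.8}--\eqref{eq2.9} (Theorem~\ref{T2.6}) when $N\leq p<\infty$. In each case, after bounding the $L^1$-norm of the corresponding ratio potential from~\textbf{(iii)} (which is finite by assumption) by a constant, the inequality takes the shape
\begin{equation}
\IR K\n{\ul}^r\dx\leq C\left(\IR\n{\gradul}^p\dx\right)^{a}\left(\IR\n{\ul}^qV\dx\right)^{b}\leq C\,E^{\delta},\qquad\delta:=a+b,\nonumber
\end{equation}
with $a,b\geq0$ and $C>0$ depending only on $N,p,q,r,\beta$ (and on $s$ when $N\leq p<\infty$) and on that $L^1$-norm --- in particular independent of $\lambda$ and $\ul$.

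The crucial point is $\delta<1$. For $p^*\leq r<q$ it is immediate, since~\eqref{eq2.7} already gives $\delta=r/q<1$. For the other two regimes I would check by a direct computation that the exponent $\delta$ built out of $\mathfrak{x},\mathfrak{y}$ (respectively out of $\hat{\mathfrak{x}},\hat{\mathfrak{y}}$ and the Caffarelli--Kohn--Nirenberg exponent $\theta$ of~\eqref{eq2.3}) is $<1$ \emph{precisely} because $\beta$ lies strictly above the left endpoint imposed in~\textbf{(iii)} --- namely $\beta>\frac{p^*(r-p)}{(p^*-p)(q-r)}$ when $1<p<N$ and $\beta>\frac{Nq+pr-Np-pq}{p(q-r)}$ when $N\leq p<\infty$; this is the only place where those numerical constraints on $\beta$ enter the proof. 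Granting $\delta<1$, combining the energy identity with the preceding estimate gives $E\leq\lambda C\,E^{\delta}$, hence $E^{1-\delta}\leq\lambda C$, and therefore
\begin{equation}
E\leq\left(\lambda C\right)^{\frac{1}{1-\delta}}=\lambda^{\gamma}C_{KV},\qquad\gamma:=\frac{1}{1-\delta}>0,\quad C_{KV}:=C^{\frac{1}{1-\delta}},\nonumber
\end{equation}
which is exactly~\eqref{eq3.2}, with $\gamma$ and $C_{KV}$ depending only on the structural data and not on $\lambda$ or $u$.

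I expect the main obstacle to be exactly this last verification --- that the lower bounds on $\beta$ in~\textbf{Standing Assumption (iii)} are the sharp thresholds for $\delta<1$. In the regime $N\leq p<\infty$ it requires carefully propagating the exponent $\theta=\theta(s)$ of~\eqref{eq2.3} through the H\"{o}lder splitting~\eqref{eq2.8}; it is enough to do this for the particular $s\in\left[q,\infty\right)$ for which the integrability hypothesis in~\textbf{(iii)} holds, and the bookkeeping simplifies when one may take $s=q$, since then $\theta=0$ and $\n{\ul}^q\leq V_0^{-1}\n{\ul}^qV$ yields $\delta=r/q$ at once. Everything else --- the energy identity and the final absorption step --- is routine.
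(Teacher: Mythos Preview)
Your proposal is correct and follows essentially the same route as the paper: both start from the energy identity~\eqref{eq3.3} and then invoke the estimates~\eqref{eq2.4}, \eqref{eq2.7}, \eqref{eq2.8} in the three regimes, with the lower bound on $\beta$ in \textbf{(iii)} being exactly the condition that makes the absorption work. The only cosmetic difference is that the paper applies Young's inequality to obtain additive terms with coefficients $\frac{1}{r_4},\frac{1}{r_5}<1$ and then absorbs, whereas you use the cruder bound $X^{a}Y^{b}\leq(X+Y)^{a+b}=E^{\delta}$ and solve $E\leq\lambda C E^{\delta}$ directly; the resulting exponent $\gamma=\frac{1}{1-\delta}$ coincides with the paper's $r_6$ (respectively $\hat{r}_6$), and the threshold $\delta<1$ is identical to the paper's $\frac{1}{r_4}+\frac{1}{r_5}<1$.
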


\begin{proof}
First, it's easily seen that each solution $\ul$ to equation $\eqref{eq1.2}$ satisfies
\begin{equation}\label{eq3.3}
\IR\n{\gradul}^p\dx+\IR\n{\ul}^qV\dx=\lambda\IR\n{\ul}^rK\dx.
\end{equation}

For $1<p<N$ and $p<r<\min\left\{p^*,q\right\}$, denote $r_4=\frac{p^*+\beta\left(p^*-q\right)}{\beta\left(p^*-r\right)}=r_1$, $r_5=\frac{p\left\{p^*+\beta\left(p^*-q\right)\right\}}{p^*\left\{r+\beta\left(r-q\right)\right\}}=\frac{p}{p^*}r_2<r_2$ and $r_6=\frac{r_4r_5}{r_4r_5-r_4-r_5}>r_3$ in \eqref{eq2.4} of proposition \ref{P2.3} to observe
\begin{equation}\label{eq3.4}
\begin{split}
&\,\lambda\IR\n{u}^rK\dx\leq\left(\IR\n{u}^qV\dx\right)^{\frac{1}{r_4}}\left(\IR\n{\gradu}^p\dx\right)^{\frac{1}{r_5}}\times\\
&\hspace{28.4mm}\left\{\lambda^{r_6}\widetilde{C}^{r_6}_1\left(\IR K^{r_3}V^{-\beta}\dx\right)^{\frac{r_6}{r_3}}\right\}^{\frac{1}{r_6}}\\
\leq&\,\,\frac{1}{r_4}\IR\n{u}^qV\dx+\frac{1}{r_5}\IR\n{\gradu}^p\dx+\lambda^{r_6}C_1(K,V).
\end{split}
\end{equation}
Here, we applied \textsf{Young}'s \textsf{inequality} with $C_1(K,V)>0$ a constant independent of $\lambda,u$.
\eqref{eq3.2} is verified via \eqref{eq3.3} if $\frac{1}{r_4}+\frac{1}{r_5}<1$, which is true provided $\frac{p^*\left(r-p\right)}{\left(p^*-p\right)\left(q-r\right)}<\beta\leq\frac{r}{q-r}$.

For $1<p<N$ but $p^*\leq r<q<\infty$, \eqref{eq2.7} of proposition \ref{P2.4} immediately yields
\begin{equation}\label{eq3.5}
\lambda\IR\n{u}^rK\dx\leq\frac{r}{q}\IR\n{u}^qV\dx+\lambda^{\frac{q}{q-r}}\widetilde{C}_1(K,V),
\end{equation}
with $\widetilde{C}_1(K,V)>0$ a constant independent of $\lambda,u$.

Finally, for $N\leq p<\infty$ and $p<r<q<\infty$, \eqref{eq2.8} of theorem \ref{T2.6} leads to
\begin{equation}\label{eq3.6}
\begin{split}
&\,\lambda\IR\n{u}^rK\dx\leq\left(\IR\n{u}^qV\dx\right)^{\frac{1}{\hat{r}_4}}\left(\IR\n{\gradu}^p\dx\right)^{\frac{1}{\hat{r}_5}}\times\\
&\hspace{28.4mm}\left\{\left(\lambda\hspace{0.2mm}C(s)V^{-\frac{s\left(1-\theta\right)}{q\hat{r}_2}}_0\right)^{\hat{r}_6}
\left(\IR K^{\hat{r}_3}V^{-\beta}\dx\right)^{\frac{\hat{r}_6}{\hat{r}_3}}\right\}^{\frac{1}{\hat{r}_6}}\\
\leq&\,\,\frac{1}{\hat{r}_4}\IR\n{u}^qV\dx+\frac{1}{\hat{r}_5}\IR\n{\gradu}^p\dx+\lambda^{\hat{r}_6}\widehat{C}_1(K,V)
\end{split}
\end{equation}
through \textsf{Young}'s \textsf{inequality} with $\widehat{C}_1(K,V)>0$ a constant independent of $\lambda,u$ for $\hat{r}_4=\frac{1}{\frac{1}{\hat{r}_1}+\frac{s\left(1-\theta\right)}{q\hat{r}_2}}$, $\hat{r}_5=\frac{1}{\frac{s\theta}{p\hat{r}_2}}$ and $\hat{r}_6=\frac{\hat{r}_4\hat{r}_5}{\hat{r}_4\hat{r}_5-\hat{r}_4-\hat{r}_5}$.
To have \textsf{Young}'s \textsf{inequality} applicable, we simply require $\hat{r}_6>0$, or equivalently, $\frac{1}{\hat{r}_4}+\frac{1}{\hat{r}_5}<1$.
Routine calculations lead to
\begin{equation}
\begin{split}
\frac{1}{\hat{r}_4}+\frac{1}{\hat{r}_5}=&\,\,\frac{1}{\hat{r}_1}+\frac{s\left(1-\theta\right)}{q\hat{r}_2}+\frac{s\theta}{p\hat{r}_2}\\
=&\,\,\frac{\beta Nps+\beta Nq^2+\beta prs-\beta Npq-\beta Nqs-\beta pqr+Npr+prs-Nqr}{\beta Nps+\beta Nq^2+\beta pqs-\beta Npq-\beta Nqs-\beta pq^2+Nps+pqs-Nqs},\nonumber
\end{split}
\end{equation}
so that $\frac{1}{\hat{r}_4}+\frac{1}{\hat{r}_5}<1$ if and only if
\begin{equation}
\beta>\mathfrak{f}(s):=\frac{Npr+Nqs+prs-Nps-Nqr-pqs}{p\left(q-r\right)\left(s-q\right)}.\nonumber
\end{equation}
It is readily seen that $\mathfrak{f}(s)<\frac{r}{q-r}$ provided $s>r$, since $Np+pq-Nq\geq Np>0$ in view of the assumption $p\geq N$.
Furthermore, it is interesting to derive that
\begin{equation}
\begin{split}
\mathfrak{f}'(s)=&\,\,\frac{Np^2q^2+Np^2r^2-2Np^2qr+p^2q^3+p^2qr^2-2p^2q^2r-Npq^3-Npqr^2+2Npq^2r}{\bre{p\left(q-r\right)\left(s-q\right)}^2}\\
=&\,\,\frac{p\left(Np+pq-Nq\right)\left(q^2+r^2-2qr\right)}{\bre{p\left(q-r\right)\left(s-q\right)}^2}\geq\frac{N}{\left(s-q\right)^2}>0\nonumber
\end{split}
\end{equation}
and $\lim\limits_{s\to q^+}\mathfrak{f}(s)=-\frac{\left(q-r\right)\left(Np+pq-Nq\right)}{\lim\limits_{s\to q^+}\bre{p\left(q-r\right)\left(s-q\right)}}=-\infty$.
Hence, $\mathfrak{f}(s)$ is an increasing function of $s\in\left(q,\infty\right)$ with supremum $\lim\limits_{s\to\infty^-}\mathfrak{f}(s)=\frac{Nq+pr-Np-pq}{p\left(q-r\right)}<\frac{r}{q-r}$.
To have the largest lower bound regarding $\left(\mathfrak{f}(s),\frac{r}{q-r}\right]$, we may take $s\to\infty$ to derive $\max\left\{0,\frac{Nq+pr-Np-pq}{p\left(q-r\right)}\right\}<\beta\leq\frac{r}{q-r}$.

Note $r_4,r_5$ in \eqref{eq3.4} depend only on $\beta,N,p,q,r$.
So, we analyze $\hat{r}_4,\hat{r}_5$ in \eqref{eq3.6} (as functions of $s$) to remove their dependence on $s$.
Suppose $\beta\in\left(\max\left\{0,\frac{Nq+pr-Np-pq}{p\left(q-r\right)}\right\},\frac{r}{q-r}\right]$ subsequently and have $\hat{r}_4,\hat{r}_5>1$ uniformly for $s\in\left[q,\infty\right]$.
Recall
\begin{equation}
\begin{split}
\mathfrak{g}(s):=&\,\,\frac{1}{\hat{r}_4}=\frac{1}{\hat{r}_1}+\frac{s\left(1-\theta\right)}{q\hat{r}_2}\\
=&\,\,\frac{\beta Nps+\beta Nqr+\beta prs-\beta Npq-\beta Nrs-\beta pqr+Npr+prs-Nrs}{\beta Nps+\beta Nq^2+\beta pqs-\beta Npq-\beta Nqs-\beta pq^2+Nps+pqs-Nqs}\nonumber
\end{split}
\end{equation}
and
\begin{equation}
\begin{split}
\mathfrak{h}(s):=&\,\,\frac{1}{\hat{r}_5}=\frac{s\theta}{p\hat{r}_2}\\
=&\,\,\frac{\beta Nq^2+\beta Nrs-\beta Nqr-\beta Nqs+Nrs-Nqr}{\beta Nps+\beta Nq^2+\beta pqs-\beta Npq-\beta Nqs-\beta pq^2+Nps+pqs-Nqs}.\nonumber
\end{split}
\end{equation}
It is straightforward (but somewhat tedious) to obtain that
\begin{equation}
\begin{split}
\mathfrak{g}'(s)=&\,\,\frac{\beta N^2p^2q+\beta N^2pqr+\beta Np^2q^2+N^2pqr}{\left\{\beta Nps+\beta Nq^2+\beta pqs-\beta Npq-\beta Nqs-\beta pq^2+Nps+pqs-Nqs\right\}^2}\\
&\,-\frac{\beta N^2p^2r+\beta N^2pq^2+\beta Np^2qr+N^2p^2r+Np^2qr}{\left\{\beta Nps+\beta Nq^2+\beta pqs-\beta Npq-\beta Nqs-\beta pq^2+Nps+pqs-Nqs\right\}^2}\\
=&\,-\frac{Np\left(Np+pq-Nq\right)\left\{r+\beta\left(r-q\right)\right\}}{\left\{s+\beta\left(s-q\right)\right\}^2\left(Np+pq-Nq\right)^2}<0\nonumber
\end{split}
\end{equation}
and
\begin{equation}
\begin{split}
\mathfrak{h}'(s)=&\,\,\frac{\beta N^2pqr+\beta N^2q^3+\beta Npq^2r+N^2pqr+Npq^2r}{\left\{\beta Nps+\beta Nq^2+\beta pqs-\beta Npq-\beta Nqs-\beta pq^2+Nps+pqs-Nqs\right\}^2}\\
&\,-\frac{\beta N^2pq^2+\beta N^2q^2r+\beta Npq^3+N^2q^2r}{\left\{\beta Nps+\beta Nq^2+\beta pqs-\beta Npq-\beta Nqs-\beta pq^2+Nps+pqs-Nqs\right\}^2}\\
=&\,\,\frac{Nq\left(Np+pq-Nq\right)\left\{r+\beta\left(r-q\right)\right\}}{\left\{s+\beta\left(s-q\right)\right\}^2\left(Np+pq-Nq\right)^2}>0.\nonumber
\end{split}
\end{equation}
Therefore, \eqref{eq3.6} is transformed to
\begin{equation}\label{eq3.7}
\lambda\IR\n{u}^rK\dx\leq\frac{r}{q}\IR\n{u}^qV\dx+\frac{1}{\delta}\IR\n{\gradu}^p\dx+\lambda^{\hat{r}_6}\widehat{C}_1(K,V),
\end{equation}
since $\frac{1}{\hat{r}_4}\leq\lim\limits_{s\to q^+}\mathfrak{g}(s)=\frac{r}{q}<1$ and $\frac{1}{\hat{r}_5}\leq\frac{1}{\delta}:=\lim\limits_{s\to\infty^-}\mathfrak{h}(s)=\frac{N\left\{r+\beta\left(r-q\right)\right\}}{\left(1+\beta\right)\left(Np+pq-Nq\right)}<1$.
\end{proof}

\begin{lem}\label{L3.2}
Under the {\bf Standing Assumptions (i)-(iii)}, the functional $\Jc$ is of class $C^1$ and is coercive in $\MVR$ so that each sequence $\bre{u_k:k\geq1}$ of functions in $\MVR$ with $\Jc(u_k)$ bounded admits of a weakly convergent subsequence in $\MVR$.
Furthermore, $\Jc$ is sequentially weakly lower semicontinuous in $\MVR$; that is, when $u_k\rightharpoonup u$ in $\MVR$, then for a subsequence relabeled using the same notation, one has
\begin{equation}\label{eq3.8}
\Jc(u)\leq\varliminf_{k\to\infty}\Jc(u_k).
\end{equation}
\end{lem}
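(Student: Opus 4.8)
The plan is to split $\Jc$ from \eqref{eq3.1} into its three summands and treat each on the factor of the norm $\nm{u}_{\MVR}=\nm{u}_{\LVR}+\nm{\n{\gradu}}_{p,\hspace{0.2mm}\RN}$ that controls it, using throughout that $\MVR$ is a subspace of $\DpR$ and that $\MVR\to\LKR$ is a \emph{continuous} embedding, supplied — according to which case of the {\bf Standing Assumptions (i)--(iii)} is in force — by Proposition \ref{P2.3}, Proposition \ref{P2.4}, Theorem \ref{T2.5}, or Theorem \ref{T2.6}.

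For the $C^1$ assertion I would argue term by term. On $\LpR$ the map $w\mapsto\frac1p\IR\n{w}^p\dx$ is of class $C^1$ with Fr\'echet derivative $\phi\mapsto\IR\n{w}^{p-2}w\cdot\phi\dx$, the continuity of that derivative being precisely the continuity of the Nemytskii operator $w\mapsto\n{w}^{p-2}w$ from $\LpR$ into its dual; composing with the bounded linear operator $u\mapsto\gradu$ shows $u\mapsto\frac1p\IR\n{\gradu}^p\dx$ is $C^1$ on $\MVR$ with derivative $v\mapsto\IR\n{\gradu}^{p-2}\gradu\cdot\gradv\dx$. In the same way, after the isometry $w\mapsto wV^{1/q}$ identifying $\LVR$ with an ordinary $L^q$-space, $u\mapsto\frac1q\IR\n{u}^qV\dx$ is $C^1$ on $\LVR$, hence on $\MVR$, with derivative $v\mapsto\IR\n{u}^{q-2}uvV\dx$; and $u\mapsto\frac{\lambda}{r}\IR\n{u}^rK\dx$ is $C^1$ on $\LKR$, hence on $\MVR$ through the continuous embedding just recalled, with derivative $v\mapsto\lambda\IR\n{u}^{r-2}uvK\dx$. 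Summing the three gives $\Jc\in C^1\pr{\MVR,\R}$, so that (for positive $u$) its critical points coincide with the weak solutions of \eqref{eq1.2}.

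For coercivity I would revisit the estimates \eqref{eq3.4}, \eqref{eq3.5} and \eqref{eq3.6} obtained in the proof of Lemma \ref{L3.1}, but apply \textsf{Young}'s \textsf{inequality} there with an auxiliary small parameter $\varepsilon>0$ in front of the two ``variable'' factors, so as to obtain in each of the three cases
\[
\lambda\IR\n{u}^rK\dx\leq\varepsilon\pr{\IR\n{\gradu}^p\dx+\IR\n{u}^qV\dx}+C_{\varepsilon}(\lambda,K,V),
\]
with $C_{\varepsilon}$ independent of $u$ (legitimate since the admissibility conditions on the \textsf{Young} exponents needed there were already secured in the proof of Lemma \ref{L3.1}). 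Then
\[
\Jc(u)\geq\pr{\tfrac1p-\tfrac{\varepsilon}{r}}\IR\n{\gradu}^p\dx+\pr{\tfrac1q-\tfrac{\varepsilon}{r}}\IR\n{u}^qV\dx-\tfrac{C_{\varepsilon}}{r},
\]
and the choice $\varepsilon=\tfrac r2\min\bre{\tfrac1p,\tfrac1q}$ makes both coefficients positive; since $p,q>1$, the quantity $\IR\n{\gradu}^p\dx+\IR\n{u}^qV\dx$ tends to $\infty$ as $\nm{u}_{\MVR}\to\infty$, so $\Jc$ is coercive. Finally, $\MVR$ is reflexive: $u\mapsto\pr{u,\gradu}$ is a linear isometry of $\MVR$ onto a closed subspace of $\LVR\times L^p\pr{\RN;\RN}$, a product of reflexive spaces since $1<p,q<\infty$; hence whenever $\Jc(u_k)$ stays bounded, coercivity forces $\sup_k\nm{u_k}_{\MVR}<\infty$ and so a weakly convergent subsequence exists.

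For the sequential weak lower semicontinuity I would take $u_k\rightharpoonup u$ in $\MVR$; since $u\mapsto\gradu$ and the inclusion $\MVR\to\LVR$ are bounded and linear, they are weak-to-weak continuous, whence $\gradu_k\rightharpoonup\gradu$ in $\LpR$ and $u_k\rightharpoonup u$ in $\LVR$, and the weak lower semicontinuity of the $L^p$- and $L^q_V$-norms combined with the monotonicity of $t\mapsto t^p,\,t\mapsto t^q$ gives
\[
\IR\n{\gradu}^p\dx\leq\varliminf_{k\to\infty}\IR\n{\gradu_k}^p\dx,\qquad\IR\n{u}^qV\dx\leq\varliminf_{k\to\infty}\IR\n{u_k}^qV\dx.
\]
For the remaining term, $u_k-u\rightharpoonup0$ with $\sup_k\nm{u_k-u}_{\MVR}<\infty$, so the pertinent compact embedding $\MVR\hookrightarrow\LKR$ gives, along a subsequence relabeled with the same index $k$, $u_k\to u$ strongly in $\LKR$ and hence $\IR\n{u_k}^rK\dx=\nm{u_k}^r_{\LKR}\to\nm{u}^r_{\LKR}=\IR\n{u}^rK\dx$; combining, and using that $\varliminf$ of a sum is at least the sum of the $\varliminf$'s once one summand converges, we arrive at $\Jc(u)\leq\varliminf_{k\to\infty}\Jc(u_k)$, which is \eqref{eq3.8}. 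The step I expect to be the main obstacle is the $C^1$ claim: one must check that every Nemytskii operator occurring in the derivative actually maps $\MVR$ into its dual, and this is exactly what the membership $\gradu\in\LpR$ together with the continuous embeddings $\MVR\to\LVR$ and $\MVR\to\LKR$ are there to guarantee; the coercivity is then a routine sharpening of Lemma \ref{L3.1}, and the lower semicontinuity a routine blend of convexity with the compactness already in hand.
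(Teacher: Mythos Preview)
Your argument is correct and tracks the paper's own proof closely: the paper also dismisses $C^1$ as standard, proves coercivity by rerunning the \textsf{Young}-inequality estimates of Lemma~\ref{L3.1} with sharper constants to get $\Jc(u)\geq\frac1{2p}\nm{\n{\gradu}}^p_{p,\RN}+\frac1{2q}\nm{u}^q_{\LVR}-\lambda^{\gamma}\widetilde{C}_{KV}$, invokes reflexivity of $\MVR$ exactly as you do, and handles weak lower semicontinuity via weak lower semicontinuity of the $L^p$- and $L^q_V$-norms together with convergence of the $K$-integral along a subsequence. One inessential slip: Theorem~\ref{T2.5} is not the compactness result matching {\bf Standing Assumption~(iii)} in the range $p^*\leq r<q$---that is Proposition~\ref{P2.4}---so drop Theorem~\ref{T2.5} from your list.
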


\begin{proof}
The proof of showing $\Jc$ is $C^1$ is standard.
The assertion regarding the boundedness of $\Jc(u_k)$ leading to the existence of a weakly convergent subsequence in $\MVR$ follows via the coercivity of $\Jc$ and the reflexivity of $\MVR$; for the latter, see proposition a.11 of \cite{AP} with $E$ now being the homogeneous gradient $L^p$ space using the notation there.

Next, we show $\Jc$ is coercive.
For $1<p<N$ and $p<r<\min\left\{p^*,q\right\}$, one has
\begin{equation}
\begin{split}
&\,\frac{\lambda}{r}\IR\n{u}^rK\dx\leq\left(\frac{r_4}{2q}\IR\n{u}^qV\dx\right)^{\frac{1}{r_4}}\left(\frac{r_5}{2p}\IR\n{\gradu}^p\dx\right)^{\frac{1}{r_5}}\times\\
&\hspace{29.1mm}\left\{\lambda^{r_6}\left(\frac{\widetilde{C}_1}{r}\right)^{r_6}\left(\frac{2q}{r_4}\right)^{\frac{r_6}{r_4}}\left(\frac{2p}{r_5}\right)^{\frac{r_6}{r_5}}
\left(\IR K^{r_3}V^{-\beta}\dx\right)^{\frac{r_6}{r_3}}\right\}^{\frac{1}{r_6}}\\
\leq&\,\,\frac{1}{2q}\IR\n{u}^qV\dx+\frac{1}{2p}\IR\n{\gradu}^p\dx+\lambda^{r_6}C_2(K,V).\nonumber
\end{split}
\end{equation}
For $1<p<N$ but $p^*\leq r<q<\infty$, one has
\begin{equation}
\frac{\lambda}{r}\IR\n{u}^rK\dx\leq\frac{1}{2q}\IR\n{u}^qV\dx+\lambda^{\frac{q}{q-r}}\widetilde{C}_2(K,V).\nonumber
\end{equation}
Finally, for $N\leq p<\infty$ and $p<r<q<\infty$, one has
\begin{equation}
\frac{\lambda}{r}\IR\n{u}^rK\dx\leq\frac{1}{2q}\IR\n{u}^qV\dx+\frac{1}{2p}\IR\n{\gradu}^p\dx+\lambda^{\hat{r}_6}\widehat{C}_2(K,V).\nonumber
\end{equation}
Here, we employed the same ideas and notations as used in lemma \ref{L3.1}, so that
\begin{equation}
\Jc(u)\geq\frac{1}{2p}\IR\n{\gradu}^p\dx+\frac{1}{2q}\IR\n{u}^qV\dx-\lambda^{\gamma}\widetilde{C}_{KV}\nonumber
\end{equation}
with (the same) $\gamma>0$ and $\widetilde{C}_{KV}>0$ some absolute constants independent of $\lambda,u$.

Notice we have just proved the coercivity of $\Jc$.
So, each sequence $\bre{u_k:k\geq1}$ of functions in $\MVR$ with bounded $\Jc(u_k)$ admits of a weakly convergent subsequence, written again as $\bre{u_k:k\geq1}$ with $u_k\rightharpoonup u\in\MVR$.
The lower semicontinuity of norms yields
\begin{equation}
\IR\n{\gradu}^p\dx\leq\varliminf_{k\to\infty}\IR\n{\gradu_k}^p\dx\hspace{2mm}\mathrm{and}\hspace{2mm}\IR\n{u}^qV\dx\leq\varliminf_{k\to\infty}\IR\n{u_k}^qV\dx,\nonumber
\end{equation}
while Lieb and Loss \cite[theorem 1.9]{LL} (together with the fact that $u_k\sqrt[r]{K}\to u\sqrt[r]{K}$ \textsl{a.e.} on $\RN$ by lemma \ref{L2.1} for yet another subsequence, still denoted by $\bre{u_k:k\geq1}$) says
\begin{equation}\label{eq3.9}
\lim_{k\to\infty}\IR\n{u_k}^rK\dx=\IR\n{u}^rK\dx.
\end{equation}
Therefore, one proves the sequentially weak lower semicontinuity \eqref{eq3.8} of $\Jc$.
\end{proof}

Define
\begin{equation}\label{eq3.10}
\tilde{\lambda}=\inf_{\substack{u\in\MVR \\ \nm{u}_{\LKR}=1}}\bre{\frac{r}{p}\IR\n{\gradu}^p\dx+\frac{r}{q}\IR\n{u}^qV\dx}.
\end{equation}
Then, one sees $\tilde{\lambda}>0$.
Actually, if not, then there is a sequence $\bre{u_l:l\geq1}$ in $\MVR$ with $\nm{u_l}_{\LKR}=1$ but $\frac{r}{p}\IR\n{\gradu_l}^p\dx+\frac{r}{q}\IR\n{u_l}^qV\dx\to0$; this would yield $\nm{u_l}_{\MVR}\to0$ that contradicts the compact embedding $\MVR\hookrightarrow\LKR$ or \eqref{eq3.9}.

Define $\lambda^*$ to be the supermum of $\lambda$ such that equation \eqref{eq1.2} only has the trivial solution for each $\mu<\lambda$, and $\lambda^{**}$ to be the infimum of $\lambda$ such that equation \eqref{eq1.2} has at least one nontrivial positive solution at $\lambda$.
Then, we have $0\leq\lambda^*=\lambda^{**}\leq\tilde{\lambda}$.
In fact, for each $\lambda>\tilde{\lambda}$,
\begin{equation}
\lambda\IR\n{\vl}^rK\dx>\frac{r}{p}\IR\n{\gradvl}^p\dx+\frac{r}{q}\IR\n{\vl}^qV\dx\nonumber
\end{equation}
follows with some $\vl\in\MVR$ by homogeneity; this can be rewritten as
\begin{equation}
\Jc(\vl)=\frac{1}{p}\IR\n{\gradvl}^p\dx+\frac{1}{q}\IR\n{\vl}^qV\dx-\frac{\lambda}{r}\IR\n{\vl}^rK\dx<0,\nonumber
\end{equation}
which along with lemma \ref{L3.2} leads to $\Jc(\ul)=\inf\limits_{u\in\MVR}\Jc(u)\leq\Jc(\vl)<0$ for an $\ul\geq0$ in $\MVR$ that is a nontrivial positive solution to problem \eqref{eq1.2}, seeing $\Jc(\n{\ul})=\Jc(\ul)$.
So, one has $\lambda^{**}\leq\tilde{\lambda}$.
On the other hand, if $\lambda^*>\lambda^{**}$, one would find a $\lambda'\in\left[\lambda^{**},\lambda^*\right)$ such that problem \eqref{eq1.2} has at least a nontrivial positive solution at $\lambda'$ according to the definition of $\lambda^{**}$ - this however is against the definition of $\lambda^*$; if $\lambda^*<\lambda^{**}$, one would find a $\lambda'\in\left(\lambda^*,\lambda^{**}\right]$ such that problem \eqref{eq1.2} has at least a nontrivial positive solution at some $\mu'\left(<\lambda'\right)$ according to the definition of $\lambda^*$ - this however is against the definition of $\lambda^{**}$.
So, one has $\lambda^*=\lambda^{**}$.

Write $\lambda_1:=\lambda^*=\lambda^{**}$ in the sequel.

\begin{prop}\label{P3.3}
Under our {\bf Standing Assumptions (i)-(iii)}, one has $\lambda_1\geq0$ and problem \eqref{eq1.2} has a nontrivial positive solution $\ul\geq0$ in $\MVR$ for all $\lambda>\lambda_1$.
\end{prop}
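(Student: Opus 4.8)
The plan is to exploit the variational structure already set up: by Lemma~\ref{L3.2}, $\Jc$ is $C^1$, coercive, and sequentially weakly lower semicontinuous on the reflexive space $\MVR$, so for any fixed $\lambda$ the infimum $m_\lambda:=\inf_{u\in\MVR}\Jc(u)$ is attained at some $\ul$. Since $\Jc(|u|)=\Jc(u)$, we may take $\ul\geq0$, and the standard strong maximum principle for the $p$-Laplacian (or a Harnack-type argument) upgrades $\ul\geq0$ to $\ul>0$ wherever it is not identically zero. The whole content of the proposition is therefore to show that the minimizer is \emph{nontrivial} precisely when $\lambda>\lambda_1$, together with the definitional identity $\lambda_1=\lambda^*=\lambda^{**}\in[0,\tilde\lambda]$ already established in the text preceding the statement.

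First I would record that $\lambda_1\geq0$ is immediate from its definition as a supremum/infimum of a set of nonnegative reals (and $\lambda_1\leq\tilde\lambda<\infty$ was shown above). Next, fix $\lambda>\lambda_1=\lambda^{**}$. By definition of $\lambda^{**}$ as the infimum of the $\lambda$'s admitting a nontrivial positive solution, there exists $\mu$ with $\lambda^{**}\leq\mu<\lambda$ (if $\lambda>\lambda^{**}$ strictly we may even pick $\mu$ arbitrarily close to, or equal to, a value where a nontrivial solution exists) such that $\eqref{eq1.2}$ with parameter $\mu$ has a nontrivial positive solution $v$. I then use $v$ as a test function to show $\Jc(v)<0$: indeed $v$ satisfies $\eqref{eq3.3}$ with $\mu$ in place of $\lambda$, i.e. $\int|\nabla v|^p + \int|v|^qV = \mu\int|v|^rK$, hence
\begin{equation}
\Jc(v)=\Bigl(\tfrac1p-\tfrac{\lambda}{\mu r}\Bigr)\IR\n{\gradv}^p\dx+\Bigl(\tfrac1q-\tfrac{\lambda}{\mu r}\Bigr)\IR\n{v}^qV\dx+\Bigl(\tfrac{\lambda}{\mu r}-\tfrac{\lambda}{r}\cdot\tfrac1{\text{?}}\Bigr)\cdots\nonumber
\end{equation}
— more cleanly, scaling $v$ by a small constant $t>0$ and using $r<\min\{p,q\}$ fails, so instead I use the homogeneity trick already displayed in the text: replacing $v$ by $tv$ and letting $t\to0^+$ is the wrong direction, but the inequality $\lambda\int|tv|^rK > \frac{r}{p}\int|\nabla(tv)|^p+\frac{r}{q}\int|tv|^qV$ holds for suitable $t$ since for the solution $v$ at level $\mu<\lambda$ one has $\lambda\int|v|^rK > \mu\int|v|^rK = \int|\nabla v|^p+\int|v|^qV \geq \frac{r}{p}\int|\nabla v|^p+\frac{r}{q}\int|v|^qV$ whenever $r\leq p$ — and since $r>p$ here one instead rescales: the functional $t\mapsto\Jc(tv)=\frac{t^p}{p}A+\frac{t^q}{q}B-\frac{\lambda t^r}{r}C$ with $A,B,C>0$ and $p<r<q$ satisfies $\Jc(tv)<0$ for $t$ in a suitable intermediate range because the $t^r$ term dominates there once $\lambda$ exceeds the threshold governed by $A,B,C$; the point is that the existence of a nontrivial solution at $\mu<\lambda$ guarantees $A,B,C$ are in the right regime, so $\inf_t\Jc(tv)<0$. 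Consequently $m_\lambda\leq\Jc(t_0 v)<0$, which forces the minimizer $\ul$ to be nontrivial (the trivial function gives $\Jc(0)=0$).

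For the boundary behavior and the reverse inclusion I would note: if $\lambda\leq\lambda^*$ then by definition $\eqref{eq1.2}$ at any $\mu<\lambda$ has only the trivial solution, and combined with $\lambda^*=\lambda^{**}$ this pins down that nontrivial positive solutions exist for exactly $\lambda>\lambda_1$ in the range covered by this proposition (the endpoint $\lambda=\lambda_1$ and the stronger statements are deferred to the part of Theorem~\ref{T1.1} requiring assumption (iv), not claimed here). Finally I would invoke standard regularity ($\ul\in C^{1,\alpha}_{loc}$ by Di{B}enedetto/Tolksdorf, using $K\in\LalocR$ and the local lower bound on $V$) and the strong maximum principle to conclude $\ul>0$ on all of $\RN$, so $\ul$ is a genuine nontrivial positive solution.

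The main obstacle I anticipate is the precise argument that $m_\lambda<0$ for \emph{every} $\lambda>\lambda_1$, not just $\lambda>\tilde\lambda$: one must go from "a nontrivial solution exists at some $\mu<\lambda$" to "$\Jc$ takes a negative value at parameter $\lambda$", and the cleanest route is the one-parameter rescaling $t\mapsto\Jc(tv)$ analyzed via the fibering-type function $\phi(t)=\frac{t^p}{p}A+\frac{t^q}{q}B-\frac{\lambda}{r}t^r C$; showing $\min_{t>0}\phi(t)<0$ requires checking that the Nehari-type constraint forces $\frac{\lambda}{r}C$ large enough relative to $A,B$, which is exactly where the hypothesis that $v$ solves the equation at level $\mu$ (hence $A+B=\mu C$) gets used. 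A secondary technical point is justifying that $|u|\in\MVR$ with $\Jc(|u|)=\Jc(u)$ — routine since $|\,\nabla|u|\,|=|\nabla u|$ a.e. — and that the minimizer is a weak solution (Lagrange multiplier / Euler–Lagrange computation, using that $\Jc\in C^1$ from Lemma~\ref{L3.2}).
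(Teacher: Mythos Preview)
Your approach differs from the paper's and contains a genuine gap at precisely the step you flag as the ``main obstacle''. You aim to show that the \emph{global} infimum $m_\lambda=\inf_{\MVR}\Jc$ is strictly negative whenever $\lambda>\lambda_1$, via the fibering map $\phi(t)=\Jc(tv)=\tfrac{t^p}{p}A+\tfrac{t^q}{q}B-\tfrac{\lambda t^r}{r}C$, where $v$ is a nontrivial solution at some $\mu\in[\lambda_1,\lambda)$ and hence $A+B=\mu C$. But the implication ``$A+B=\mu C$ with $\mu<\lambda$ $\Longrightarrow$ $\min_{t>0}\phi(t)<0$'' is \emph{false} in general. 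At the larger positive critical point $t_2$ of $\phi$ one has $At_2^p+Bt_2^q=\lambda Ct_2^r$, whence $\phi(t_2)=\tfrac{r-p}{pr}\,At_2^p-\tfrac{q-r}{qr}\,Bt_2^q$, which is negative iff $Bt_2^{q-p}/A>\tfrac{q(r-p)}{p(q-r)}$. The only structural information available (that $t_2$ lies to the right of the minimum of $g(t)=A+Bt^{q-p}-\lambda Ct^{r-p}$) yields merely the weaker bound $Bt_2^{q-p}/A>\tfrac{r-p}{q-r}$. Concretely, take $p=2$, $r=3$, $q=6$, $A=1$, $B=\tfrac12(1.1)^{-4}\approx0.342$, $\lambda C=\tfrac{3}{2}(1.1)^{-1}\approx1.364$: then $\mu C=A+B\approx1.342<\lambda C$, the larger critical point is $t_2=1.1$, and $\phi(1.1)\approx0.10>0$. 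Since $\phi(0)=0$ and $\phi(t)\sim t^pA/p>0$ near $0$, in fact $\phi(t)>0$ for every $t>0$, so $\Jc(tv)>0$ for all $t>0$ and nothing follows about $m_\lambda$. The relation $A+B=\mu C$ is the only property of $v$ you invoke, so the argument cannot be closed as written; you would need, e.g., $\mathcal{J}_\mu(v)\leq0$, which is known only for $\mu>\tilde\lambda$, not for $\mu\in[\lambda_1,\tilde\lambda)$.

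The paper sidesteps this entirely. It does \emph{not} try to show $m_\lambda<0$; instead it uses that the solution $\um$ at level $\mu<\lambda$ is a \emph{sub-solution} at level $\lambda$, and minimizes $\Jc$ over the weakly closed convex set $\Ms=\{u\in\MVR:u\geq\um\}$. Lemma~\ref{L3.2} furnishes a minimizer $\ul\geq\um$, and a variational-inequality computation (following Struwe \cite[theorem~2.4]{St}) shows this constrained minimizer is an unconstrained critical point of $\Jc$. Nontriviality is then automatic from $\ul\geq\um\not\equiv0$; no sign information on $\Jc(\ul)$ is needed or claimed.
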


\begin{proof}
By definition, $\lambda_1=\lambda^*$ so that if $\ul$ is a nontrivial positive solution to equation \eqref{eq1.2} in $\MVR$, then $\lambda\geq\lambda_1$.
Below, we verify \eqref{eq1.2} has at least one nontrivial solution $\ul\geq0$ in $\MVR$ for each $\lambda>\lambda_1$ using Struwe \cite[theorem 2.4]{St}; see also \cite[theorem 4.2]{AP}.

By definition, $\lambda_1=\lambda^{**}$; so, there is a $\mu\in\left[\lambda_1,\lambda\right)$ such that \eqref{eq1.2} has a nontrivial solution $\um\geq0$ in $\MVR$, which clearly is a sub-solution for \eqref{eq1.2} at $\lambda$.
Consider the constrained minimization problem $\inf\limits_{u\in\Ms}\Jc(u)$ with $\Ms:=\bre{u\in\MVR:u\geq\um}$.
Notice $\Ms$ is closed and convex, and thus is weakly closed in $\MVR$.
So, lemma \ref{L3.2} ensures the attainment of a minimizer of $\Jc$ in $\Ms$; that is, there is an $\ul\left(\geq\um\right)$ in $\Ms$ satisfying $\Jc(\ul)=\inf\limits_{u\in\Ms}\Jc(u)$.
Take $\varphi\in\CCON$, and set $\vv:=\max\bre{0,\um-\ul+\varepsilon\varphi}\geq0$ and $\ve:=\vv+\ul-\varepsilon\varphi\left(\geq\um\right)$ in $\Ms$ for some $\varepsilon>0$.
Then, one has $\Jc'(\ul)(\ul)\leq\Jc'(\ul)(\ve)$ that further implies
\begin{equation}\label{eq3.11}
\Jc'(\ul)(\varphi)\leq\frac{1}{\varepsilon}\Jc'(\ul)(\vv).
\end{equation}

Put $\Ov:=\bre{x\in\RN:\vv(x)>0}=\bre{x\in\RN:\ul(x)-\varepsilon\varphi(x)<\um(x)}\subseteq\operatorname{supp}(\varphi^+)$.
Since $\um$ is a sub-solution for \eqref{eq1.2} at $\lambda$ and $\vv\geq0$, $\Jc'(\um)(\vv)\leq0$ follows and one has
\begin{equation}
\begin{split}
&\,\Jc'(\ul)(\vv)\leq\Jc'(\ul)(\vv)-\Jc'(\um)(\vv)\\
\leq&-\IOv\left(\n{\gradul}^{p-2}\gradul-\n{\gradum}^{p-2}\gradum\right)\cdot\left(\gradul-\gradum\right)\dx\\
&+\varepsilon\,\Biggl\{\IOv\left(\n{\gradul}^{p-2}\gradul-\n{\gradum}^{p-2}\gradum\right)\cdot\nabla\varphi\dx\\
&\hspace{10mm}+\IOv\left(\ul^{q-1}-\um^{q-1}\right)\n{\varphi}V\dx+\lambda\IOv\left(\ul^{r-1}-\um^{r-1}\right)\n{\varphi}K\dx\Biggr\}\\
\leq&\,\,\varepsilon\,\Bigl\{\nm{\n{\nabla\varphi}}_{p,\hspace{0.2mm}\Ov}\Bigl[\nm{\n{\gradul}}^{p-1}_{p,\hspace{0.2mm}\Ov}+\nm{\n{\gradum}}^{p-1}_{p,\hspace{0.2mm}\Ov}\Bigr]
+\nm{\varphi}_{\LVOv}\Bigl[\nm{\ul}^{q-1}_{\LVOv}+\nm{\um}^{q-1}_{\LVOv}\Bigr]\\
&\hspace{6mm}+\lambda\nm{\varphi}_{\LKOv}\Bigl[\nm{\ul}^{r-1}_{\LKOv}+\nm{\um}^{r-1}_{\LKOv}\Bigr]\Bigr\}=o(\varepsilon)\nonumber
\end{split}
\end{equation}
as $\varepsilon\to0^+$, noticing $0<\vv\leq\varepsilon\n{\varphi}$ on $\Ov$.
This combined with \eqref{eq3.11} yields $\Jc'(\ul)(\varphi)\leq0$ for any $\varphi\in\CCON$ so that it implies $\Jc'(\ul)(-\varphi)\leq0$ as well.
By density, $\Jc'(\ul)(v)=0$ for all $v\in\MVR$, and thus $\ul\left(\geq\um\geq0\right)$ is a nontrivial solution to \eqref{eq1.2} at $\lambda$.
\end{proof}

Proposition \ref{P3.3} obviously provides the proof for the first assertion of theorem \ref{T1.1}.
In order to proceed as well as for convenience of the reader, we recall \cite[lemma 3.4]{Ha2}.

\begin{lem}\label{L3.4}
Let $\Om$ be a domain in $\RN$ for $N\geq1$, and let $f,g$ be two functions in $\LtOm$ for $t\in\left(1,\infty\right)$.
Then, there is a constant $C_t>0$, depending on $\Om,N,t$, such that
\begin{equation}
\IOm\left(\n{f}^{t-2}f-\n{g}^{t-2}g\right)\left(f-g\right)\dx\geq\left\{\begin{array}{ll}
C_t\nm{f-g}^t_{t,\hspace{0.2mm}\Om}&when\hspace{2mm}t\geq2,\\\\
C_t\,\frac{\nm{f-g}^2_{t,\hspace{0.2mm}\Om}}{\left(\nm{f}_{t,\hspace{0.2mm}\Om}+\nm{g}_{t,\hspace{0.2mm}\Om}\right)^{2-t}}&when\hspace{2mm}1<t<2.\nonumber
\end{array}\right.
\end{equation}
\end{lem}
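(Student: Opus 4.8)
The plan is to reduce the lemma to the two classical pointwise monotonicity inequalities for the map $\xi\mapsto\n{\xi}^{t-2}\xi$ and then integrate, an extra application of \textsf{H\"{o}lder}'s \textsf{inequality} being needed only in the subquadratic range $1<t<2$. Recall that for all $a,b\in\R$ (with the evident modifications the same holds for vector-valued arguments) and all $t>1$ one has
\begin{equation*}
\pr{\n{a}^{t-2}a-\n{b}^{t-2}b}\pr{a-b}\geq
\begin{cases}
c_t\,\n{a-b}^t & \text{if } t\geq2,\\
\pr{t-1}\dfrac{\n{a-b}^2}{\pr{\n{a}+\n{b}}^{2-t}} & \text{if } 1<t<2,
\end{cases}
\end{equation*}
where $c_t>0$ is a constant depending only on $t$ and, in the second line, the quotient is read as $0$ when $a=b=0$. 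The subquadratic bound follows from the identity $\n{a}^{t-2}a-\n{b}^{t-2}b=\pr{t-1}\pr{a-b}\int_0^1\n{b+\tau\pr{a-b}}^{t-2}\,d\tau$ together with the estimate $\n{b+\tau\pr{a-b}}\leq\n{a}+\n{b}$ for $\tau\in\br{0,1}$ and the fact that $s\mapsto s^{t-2}$ is nonincreasing; the superquadratic one is equally standard.

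For $t\geq2$ nothing more is needed: integrating the first inequality over $\Om$ gives $\IOm\pr{\n{f}^{t-2}f-\n{g}^{t-2}g}\pr{f-g}\dx\geq c_t\nm{f-g}^t_{t,\hspace{0.2mm}\Om}$, so $C_t=c_t$ works. For $1<t<2$, integrating the second inequality yields
\begin{equation*}
\IOm\pr{\n{f}^{t-2}f-\n{g}^{t-2}g}\pr{f-g}\dx\geq\pr{t-1}\IOm\frac{\n{f-g}^2}{\pr{\n{f}+\n{g}}^{2-t}}\dx,
\end{equation*}
so it remains to bound the right-hand integral below by a multiple of $\nm{f-g}^2_{t,\hspace{0.2mm}\Om}\big/\pr{\nm{f}_{t,\hspace{0.2mm}\Om}+\nm{g}_{t,\hspace{0.2mm}\Om}}^{2-t}$. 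On the set $\bre{\n{f}+\n{g}>0}$ (its complement contributes nothing, as $f=g$ there) write $\n{f-g}^t=\n{f-g}^t\pr{\n{f}+\n{g}}^{-t(2-t)/2}\cdot\pr{\n{f}+\n{g}}^{t(2-t)/2}$ and apply \textsf{H\"{o}lder}'s \textsf{inequality} with the conjugate exponents $2/t$ and $2/(2-t)$, getting
\begin{equation*}
\nm{f-g}^t_{t,\hspace{0.2mm}\Om}\leq\pr{\IOm\frac{\n{f-g}^2}{\pr{\n{f}+\n{g}}^{2-t}}\dx}^{t/2}\pr{\IOm\pr{\n{f}+\n{g}}^t\dx}^{(2-t)/2}.
\end{equation*}
Raising both sides to the power $2/t$ and using $\nm{\,\n{f}+\n{g}\,}_{t,\hspace{0.2mm}\Om}\leq\nm{f}_{t,\hspace{0.2mm}\Om}+\nm{g}_{t,\hspace{0.2mm}\Om}$ then gives
\begin{equation*}
\IOm\frac{\n{f-g}^2}{\pr{\n{f}+\n{g}}^{2-t}}\dx\geq\frac{\nm{f-g}^2_{t,\hspace{0.2mm}\Om}}{\pr{\nm{f}_{t,\hspace{0.2mm}\Om}+\nm{g}_{t,\hspace{0.2mm}\Om}}^{2-t}},
\end{equation*}
which together with the previous display establishes the claim with $C_t=t-1$; the degenerate cases $f\equiv g$ and $\nm{f}_{t,\hspace{0.2mm}\Om}+\nm{g}_{t,\hspace{0.2mm}\Om}=0$ (which forces $f=g=0$) are trivial.

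The only genuinely delicate ingredient is the subquadratic pointwise inequality together with the bookkeeping on the set $\bre{\n{f}+\n{g}=0}$; everything else is H\"{o}lder's inequality and exponent arithmetic. Note that no integrability question arises: the integrand $\pr{\n{f}^{t-2}f-\n{g}^{t-2}g}\pr{f-g}$ is nonnegative, so the left-hand side of the asserted inequality is well defined in $\br{0,\infty}$ — and the inequality is vacuous should it equal $+\infty$ — while the right-hand side is finite because $f,g\in L^t\pr{\Om}$.
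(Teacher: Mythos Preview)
Your proof is correct and entirely standard: the pointwise monotonicity inequalities for $\xi\mapsto\n{\xi}^{t-2}\xi$ together with the \textsf{H\"{o}lder} step in the subquadratic case are exactly how this lemma is usually established. Note that the paper itself does not prove Lemma~\ref{L3.4} but only quotes it from \cite[lemma 3.4]{Ha2}, so there is no in-paper argument to compare against; your write-up supplies precisely the details that are suppressed there, and in fact shows that $C_t$ depends only on $t$ (not on $\Om$ or $N$, as the statement allows).
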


\begin{lem}\label{L3.5}
Under the {\bf Standing Assumptions (i)-(iv)}, there are some absolute constants $C_K,\widehat{C}_{KV}>0$ such that $\lambda_1\geq\widehat{C}_{KV}>0$ and such that each nontrivial solution $\ul$ to equation $\eqref{eq1.2}$ in $\MVR$ satisfies
\begin{equation}\label{eq3.12}
\IR\n{\ul}^rK\dx\geq\left(\lambda\hspace{0.2mm}C^{\frac{p}{r}}_K\right)^{\frac{r}{p-r}}.
\end{equation}
\end{lem}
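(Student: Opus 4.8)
The plan is to test equation \eqref{eq1.2} against $\ul$ itself, use \textbf{Standing Assumption (iv)} to control the right-hand side via the new Sobolev exponent $p^*/(p^*-r)$, and then isolate the $L^r_{\mathrm K}$-norm of $\ul$. First I would recall the energy identity \eqref{eq3.3}, namely $\IR\n{\gradul}^p\dx+\IR\n{\ul}^qV\dx=\lambda\IR\n{\ul}^rK\dx$, so that in particular $\IR\n{\gradul}^p\dx\leq\lambda\IR\n{\ul}^rK\dx$ since $V>0$. Next, under \textbf{(iv)} we have $K(x)\in\LpccR$, so \textsf{H\"older}'s \textsf{inequality} with exponents $\frac{p^*}{r}$ and $\frac{p^*}{p^*-r}$ gives
\begin{equation}
\IR\n{\ul}^rK\dx\leq\left(\IR\n{\ul}^{p^*}\dx\right)^{\frac{r}{p^*}}\left(\IR K^{\frac{p^*}{p^*-r}}\dx\right)^{\frac{p^*-r}{p^*}}=\nm{K}_{\LpccR}\nm{\ul}^r_{p^*,\hspace{0.2mm}\RN},\nonumber
\end{equation}
and then \eqref{eq2.2} yields $\IR\n{\ul}^rK\dx\leq C^r_1\nm{K}_{\LpccR}\nm{\n{\gradul}}^r_{p,\hspace{0.2mm}\RN}$. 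Set $C_K>0$ so that $C^{\frac{p}{r}}_K$ collects the constant $C^r_1\nm{K}_{\LpccR}$ in the appropriate power; concretely one may take $C^{\frac{p}{r}}_K=\left(C^r_1\nm{K}_{\LpccR}\right)^{-1}$, an absolute constant depending only on $N,p,r,K$.

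From here the argument is purely algebraic. Combining the two displays above with \eqref{eq3.3} gives
\begin{equation}
\IR\n{\ul}^rK\dx\leq C^r_1\nm{K}_{\LpccR}\left(\lambda\IR\n{\ul}^rK\dx\right)^{\frac{r}{p}}=C^{-\frac{p}{r}\cdot\frac{r}{p}\cdot\frac{r}{p}}_K\,\lambda^{\frac{r}{p}}\left(\IR\n{\ul}^rK\dx\right)^{\frac{r}{p}};\nonumber
\end{equation}
I would write $M:=\IR\n{\ul}^rK\dx$, which is strictly positive because $\ul$ is nontrivial and $K>0$, so that dividing by $M^{r/p}$ (legitimate since $M>0$) and raising both sides to the power $\frac{p}{p-r}<0$ — which reverses the inequality — produces exactly \eqref{eq3.12}, i.e. $M\geq\left(\lambda\hspace{0.2mm}C^{\frac{p}{r}}_K\right)^{\frac{r}{p-r}}$. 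One has to be careful that $r>p$ under \textbf{(iv)}, hence $p-r<0$ and $\frac{r}{p-r}<0$, so the inequality direction flips precisely once; this bookkeeping on exponent signs is the one genuinely delicate point, though it is routine.

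Finally, for the lower bound on $\lambda_1$: if $\lambda<\widehat{C}_{KV}$ for a suitable $\widehat{C}_{KV}>0$, I would show \eqref{eq1.2} has only the trivial solution, which forces $\lambda_1=\lambda^*\geq\widehat{C}_{KV}>0$. Indeed, feeding \eqref{eq3.12} back into \eqref{eq3.3} and using $\IR\n{\ul}^qV\dx\geq0$, together with the continuity of $\MVR\hookrightarrow\LKR$ from Proposition \ref{P2.3} (which under \textbf{(iv)} gives a constant bounding $\nm{\ul}_{\LKR}$ by $\nm{\ul}_{\MVR}$), yields a two-sided estimate on $M$ that is only consistent when $\lambda$ is bounded below by an absolute constant; alternatively, and more cleanly, one notes that any nontrivial solution satisfies $\lambda\geq\tilde\lambda^{-1}\cdot(\text{something})$ via \eqref{eq3.10}, so $\lambda_1\geq\widehat{C}_{KV}$ with $\widehat{C}_{KV}$ expressed through $C_K$ and the embedding constant. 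The main obstacle is not any single estimate but assembling the constants $C_K$ and $\widehat{C}_{KV}$ so that they are manifestly independent of $\lambda$ and $\ul$; once the Hölder–Sobolev chain above is in place, everything else is algebra on the exponent $\frac{r}{p-r}$.
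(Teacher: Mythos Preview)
Your derivation of \eqref{eq3.12} is correct and is essentially the paper's argument: H\"older with exponents $\frac{p^*}{r},\frac{p^*}{p^*-r}$, then the Sobolev inequality \eqref{eq2.2}, then combine with \eqref{eq3.3} and flip the inequality when raising to the negative power $\frac{p}{p-r}$. (A minor bookkeeping slip: your concrete choice $C_K^{p/r}=(C_1^r\nm{K}_{\LpccR})^{-1}$ does not actually reproduce the exponent in \eqref{eq3.12}. The paper simply takes $C_K$ to be the constant in the estimate $\IR\n{u}^rK\,dx\leq C_K\bigl(\IR\n{\gradu}^p\,dx\bigr)^{r/p}$, namely $C_K=C_1^r\nm{K}_{\LpccR}$, and then \eqref{eq3.12} drops out directly.)

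The argument for $\lambda_1>0$, however, has a genuine gap. Both routes you sketch are either circular or do not close. Feeding \eqref{eq3.12} into \eqref{eq3.3} gives a \emph{lower} bound on $\nm{\n{\gradul}}_p^p+\nm{\ul}_{\LVR}^q$, but to force $\lambda$ to be bounded below you need an \emph{upper} bound on $M=\IR\n{\ul}^rK\,dx$ that does not itself involve $M$. The embedding constant only gives $M\leq C\nm{\ul}_{\MVR}^r$, and substituting \eqref{eq3.3} there (after dropping the $V$-term) just reproduces the inequality you already used to obtain \eqref{eq3.12}; you go in a circle. The appeal to \eqref{eq3.10} fares no better: $\tilde\lambda$ furnishes an upper bound on $\lambda_1$, not a lower one, and the functional in \eqref{eq3.10} is not $r$-homogeneous, so no clean comparison with \eqref{eq3.3} is available.

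What is missing is the a priori estimate \eqref{eq3.2} of Lemma~\ref{L3.1}, which gives $\IR\n{\gradul}^p\,dx\leq\lambda^{\gamma}C_{KV}$ with $\gamma>0$ and $C_{KV}$ absolute. Plugging this into the H\"older--Sobolev bound \eqref{eq3.13} yields $M\leq C_K(\lambda^{\gamma}C_{KV})^{r/p}$, an upper bound genuinely independent of $M$; combined with \eqref{eq3.12} one obtains $(\lambda C_K^{p/r})^{p/(p-r)}\leq\lambda^{\gamma}C_K^{p/r}C_{KV}$, and since $\frac{p}{p-r}<0<\gamma$ this forces $\lambda\geq\widehat{C}_{KV}>0$. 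The point is that \eqref{eq3.2} extracts information from the $V$-term via the refined H\"older--Young splitting in Lemma~\ref{L3.1} (this is exactly why Standing Assumption~(iii) requires $\beta>\frac{p^*(r-p)}{(p^*-p)(q-r)}$), and that information cannot be recovered from \eqref{eq3.3} together with the plain embedding constant.
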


\begin{proof}
First, notice that at present we only consider $1<p<N$ and $p<r<\min\left\{p^*,q\right\}$.
Take $u\in\MVR$ to see, by \textsf{H\"{o}lder}'s \textsf{inequality} and \eqref{eq2.2} for an absolute constant $C_K>0$,
\begin{equation}\label{eq3.13}
\begin{split}
&\,\IR\n{u}^rK\dx\leq\left(\IR\n{u}^{p^*}\dx\right)^{\frac{r}{p^*}}\left(\IR K^{\frac{p^*}{p^*-r}}\dx\right)^{\frac{p^*-r}{p^*}}\\
\leq&\,\,C_K\left(\IR\n{\gradu}^p\dx\right)^{\frac{r}{p}}.
\end{split}
\end{equation}
Combining this with \eqref{eq3.3}, one observes, for each nontrivial solution $\ul$ to \eqref{eq1.2},
\begin{equation}
\left(\IR\n{\ul}^rK\dx\right)^{\frac{p}{r}}\leq\lambda\hspace{0.2mm}C^{\frac{p}{r}}_K\IR\n{\ul}^rK\dx,\nonumber
\end{equation}
which in turn yields \eqref{eq3.12}.
Moreover, by \eqref{eq3.2} and \eqref{eq3.12}, one sees $\left(\lambda\hspace{0.2mm}C^{\frac{p}{r}}_K\right)^{\frac{p}{p-r}}\leq\lambda^{\gamma}C^{\frac{p}{r}}_KC_{KV}$, and thus $\lambda\geq\widehat{C}_{KV}>0$ since $p<r$, which in particular implies $\lambda_1\geq\widehat{C}_{KV}>0$.
\end{proof}

Finally, we are ready to discuss the second twofold assertion of theorem \ref{T1.1}.

\begin{prop}\label{P3.6}
Under our {\bf Standing Assumptions (i)-(iv)}, one has $\lambda_1>0$ and problem \eqref{eq1.2} has a nontrivial positive solution $\ul\geq0$ in $\MVR$ if and only if $\lambda\geq\lambda_1$; besides, for $\lambda_2:=\tilde{\lambda}\left(\geq\lambda_1\right)$ as described in \eqref{eq3.10}, problem \eqref{eq1.2} has at least two distinct nontrivial positive solutions $\ul,\ult\geq0$ in $\MVR$ for every $\lambda>\lambda_2$.
\end{prop}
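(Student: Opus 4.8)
The plan is to establish the three assertions in order. That $\lambda_1>0$ is exactly Lemma \ref{L3.5}. The ``only if'' half of the equivalence is immediate from $\lambda_1=\lambda^{**}$: any $\lambda$ for which \eqref{eq1.2} has a nontrivial nonnegative solution belongs to the set whose infimum is $\lambda^{**}$, so $\lambda\geq\lambda_1$. For the ``if'' half, the range $\lambda>\lambda_1$ is Proposition \ref{P3.3}, so the substance is to solve \eqref{eq1.2} at the endpoint $\lambda=\lambda_1$ and, separately, to produce a second solution when $\lambda>\lambda_2=\tilde{\lambda}$.

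\emph{Solvability at $\lambda=\lambda_1$.} I would take $\lambda_k\downarrow\lambda_1$ with $\lambda_k>\lambda_1$ and, by Proposition \ref{P3.3}, nontrivial solutions $u_k:=u_{\lambda_k}\geq0$ of \eqref{eq1.2}. Lemma \ref{L3.1} and the boundedness of $\lambda_k^{\gamma}$ bound $\bre{u_k:k\geq1}$ in $\MVR$, so, along a subsequence, $u_k\rightharpoonup u$ in $\MVR$ and, by the compact embedding $\MVR\hookrightarrow\LKR$ of Proposition \ref{P2.3}, $u_k\to u$ in $\LKR$ and a.e.; clearly $u\geq0$. Testing the equation for $u_k$ against $u_k-u\in\MVR$, one has $\lambda_k\IR u_k^{r-1}(u_k-u)K\dx\to0$ by \textsf{H\"{o}lder}'s \textsf{inequality} and the $\LKR$-convergence, while $\varliminf_k\IR u_k^{q-1}(u_k-u)V\dx\geq0$, upon writing it as $\IR u_k^qV\dx-\IR u_k^{q-1}uV\dx$ and using weak lower semicontinuity on the first integral together with a.e.\ convergence and boundedness in the dual of $\LVR$ on the second. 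Hence $\varlimsup_k\IR\n{\gradu_k}^{p-2}\gradu_k\cdot\nabla(u_k-u)\dx\leq0$, whereas monotonicity yields $\varliminf_k\IR\n{\gradu_k}^{p-2}\gradu_k\cdot\nabla(u_k-u)\dx\geq0$, so this quantity tends to $0$; the $(S_+)$-property of $\Lp$ then gives $\gradu_k\to\gradu$ in $\LpR$. Feeding this back, $\IR u_k^{q-1}(u_k-u)V\dx\to0$, hence $\IR u_k^qV\dx\to\IR u^qV\dx$, which with a.e.\ convergence and the uniform convexity of $\LVR$ forces $u_k\to u$ in $\LVR$. Thus $u_k\to u$ strongly in $\MVR$, so passing to the limit in the weak formulation for $u_k$ shows $u$ solves \eqref{eq1.2} at $\lambda_1$; it is nontrivial because $\IR u_k^rK\dx\to\IR u^rK\dx$ while Lemma \ref{L3.5} gives $\IR u_k^rK\dx\geq\left(\lambda_k\hspace{0.2mm}C^{\frac{p}{r}}_K\right)^{\frac{r}{p-r}}\to\left(\lambda_1\hspace{0.2mm}C^{\frac{p}{r}}_K\right)^{\frac{r}{p-r}}>0$.

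\emph{A second solution for $\lambda>\lambda_2$.} I would pass to the truncated functional
\begin{equation}
\Jc^+(u):=\frac{1}{p}\IR\n{\gradu}^p\dx+\frac{1}{q}\IR\n{u}^qV\dx-\frac{\lambda}{r}\IR\left(u^+\right)^rK\dx,\qquad u^+:=\max\bre{u,0},\nonumber
\end{equation}
which, exactly as in Lemmas \ref{L3.1}--\ref{L3.2}, is of class $C^1$, coercive, and sequentially weakly lower semicontinuous on $\MVR$; every critical point $w$ of $\Jc^+$ is moreover nonnegative, since evaluating $\left(\Jc^+\right)'(w)$ at $-w^-$, $w^-:=\max\bre{-w,0}$, makes the $K$-integral vanish and leaves $\IR\n{\nabla w^-}^p\dx+\IR\left(w^-\right)^qV\dx=0$, whence $w^-\equiv0$, so the critical points of $\Jc^+$ are precisely the nontrivial nonnegative solutions of \eqref{eq1.2}. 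For $\lambda>\lambda_2=\tilde{\lambda}$, coercivity and weak lower semicontinuity produce a global minimizer $\ul\geq0$ of $\Jc^+$ with $\Jc^+(\ul)=\inf\Jc^+<0$, the strict negativity holding because any nonnegative $v\in\MVR$ with $\nm{v}_{\LKR}=1$ and $\frac{r}{p}\IR\n{\gradv}^p\dx+\frac{r}{q}\IR\n{v}^qV\dx<\lambda$ (which exists since $\lambda>\tilde{\lambda}$, cf.\ \eqref{eq3.10}) satisfies $\Jc^+(v)<0$. On the other hand, \eqref{eq3.13} (valid under the {\bf Standing Assumption (iv)}) gives $\IR\left(u^+\right)^rK\dx\leq C_K\left(\IR\n{\gradu}^p\dx\right)^{\frac{r}{p}}$ with $r>p$, so $\Jc^+(u)\geq\rho>0$ on the sphere $\nm{u}_{\MVR}=\delta$ for $\delta>0$ small, while $\Jc^+(0)=0$ and $\nm{\ul}_{\MVR}>\delta$. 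This is the mountain-pass geometry, and the minimax value $c\geq\rho$ is a critical value once the Palais--Smale condition is verified: a $(PS)_c$-sequence is bounded in $\MVR$ by coercivity, and then the monotonicity$/(S_+)$ argument of the previous paragraph applies verbatim --- with $\left(\Jc^+\right)'(u_k)(u_k-u)\to0$ in the role of the equation and the compact embedding $\MVR\hookrightarrow\LKR$ controlling the $\left(u_k^+\right)^{r-1}$-term --- yielding $u_k\to u$ strongly in $\MVR$. The mountain-pass theorem (Struwe \cite[theorem 2.4]{St}; see also \cite[theorem 4.2]{AP}) then furnishes a critical point $\ult$ of $\Jc^+$ with $\Jc^+(\ult)=c>0$; it is nonnegative by the truncation, nontrivial since $c\neq0=\Jc^+(0)$, and distinct from $\ul$ since $\Jc^+(\ul)<0<c$.

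The main obstacle is compactness: both in the limit $\lambda=\lambda_1$ and in the Palais--Smale verification, weak convergence in $\MVR$ must be upgraded to strong convergence, which rests on the compact embedding $\MVR\hookrightarrow\LKR$ of Section \ref{FSD} (rendering the lower-order term a compact perturbation), the monotone$/(S_+)$ structure of $\Lp$ reinforced by the favorable sign of the $\n{u}^{q-2}uV$ term, and the weak lower semicontinuity of the leading terms; the remaining delicate point is the nontriviality of the threshold solution, which is guaranteed by the uniform lower bound of Lemma \ref{L3.5}.
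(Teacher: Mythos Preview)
Your argument is correct and follows the paper's strategy closely: $\lambda_1>0$ from Lemma~\ref{L3.5}; solvability at $\lambda_1$ by approximating with $\lambda_k\downarrow\lambda_1$, using the compact embedding $\MVR\hookrightarrow\LKR$ on the $K$-term and monotonicity on the remaining terms to upgrade weak to strong convergence, with nontriviality supplied by the uniform lower bound \eqref{eq3.12}; and a second solution via mountain pass. The differences are cosmetic. First, for the strong-convergence step the paper subtracts the fixed linear functional $\Jc'(\omega)(\cdot)$ (and the $K$-contributions) so that the sum of the two monotone expressions is seen to vanish at once, then invokes Lemma~\ref{L3.4}; your $\varliminf/\varlimsup$ route with the $(S_+)$-property reaches the same conclusion but is slightly longer. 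Second, to secure nonnegativity of the mountain-pass critical point the paper relies on the evenness $\Jc(u)=\Jc(|u|)$ and selects a nonnegative Palais--Smale sequence, whereas your truncated functional $\Jc^+$ forces $w^-\equiv0$ automatically; your device is arguably cleaner here. One small correction: the reference you cite for the mountain-pass step (Struwe \cite[theorem~2.4]{St} and \cite[theorem~4.2]{AP}) is what the paper uses in Proposition~\ref{P3.3} for the sub-/super-solution constrained minimization, not for the mountain pass --- since you have verified the Palais--Smale condition, the classical Ambrosetti--Rabinowitz theorem suffices, and the paper itself quotes Candela--Palmieri \cite[theorem~2.5]{CP}.
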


\begin{proof}
Recall when $\ul\geq0$ is a nontrivial positive solution to \eqref{eq1.2} in $\MVR$, then $\lambda\geq\lambda_1$; also, $\lambda_1>0$ was proved.
Hence, one only needs to show problem \eqref{eq1.2} has a nontrivial positive solution at $\lambda_1$.
Let $\bre{\lambda(k)>\lambda_1:k\geq1}$ decrease to $\lambda_1$, with $\bre{\ulk\geq0:k\geq1}$ an associated sequence of nontrivial solutions to \eqref{eq1.2} that is bounded in $\MVR$ by \eqref{eq3.2}.
Then, there is a subsequence $\bre{\ulk:k\geq1}$, using the same index $\lambda(k)$, with $\n{\gradulk}\rightharpoonup\n{\gradw}$ in $\LpR$, $\ulk\rightharpoonup\omega$ in $\LVR$ while $\ulk\to\omega$ in $\LKR$ for a function $\omega\in\MVR$ such that $\ulk\to\omega$ \textsl{a.e.} on $\RN$.
Then, one has \eqref{eq3.9} for $\ulk,\omega\geq0$ (instead of $u_k,u$) and
\begin{equation}
\lim_{k\to\infty}\IR\ulk^{r-1}vK\dx=\IR\omega^{r-1}vK\dx,\hspace{6mm}\forall\hspace{2mm}v\in\MVR;\nonumber
\end{equation}
see for example \cite[lemma 3.4]{AP}.
Keep in mind $\Jclk'(\ulk)=0$; that is,
\begin{equation}\label{eq3.14}
\IR\n{\gradulk}^{p-2}\gradulk\cdot\gradv\dx+\IR\ulk^{q-1}vV\dx=\lambda(k)\IR\ulk^{r-1}vK\dx
\end{equation}
for all $v\in\MVR$.
As $\Jc'(\omega)$ is a continuous, linear functional on $\MVR$,
\begin{equation}
\begin{split}
0\leftarrow&\,\,\Jclk'(\ulk)\left(\ulk-\omega\right)+\lambda(k)\IR\ulk^{r-1}\left(\ulk-\omega\right)K\dx\\
&-\Jc'(\omega)\left(\ulk-\omega\right)-\lambda\IR\omega^{r-1}\left(\ulk-\omega\right)K\dx\\
=&\IR\left(\n{\gradulk}^{p-2}\gradulk-\n{\gradw}^{p-2}\gradw\right)\cdot\left(\gradulk-\gradw\right)\dx\\
&+\IR\left(\ulk^{q-1}-\omega^{q-1}\right)\left(\ulk-\omega\right)V\dx\nonumber
\end{split}
\end{equation}
follows when $k\to\infty$, from which along with lemma \ref{L3.4} one derives $\ulk\to\omega$ in $\MVR$.
This in particular implies $\gradulk\to\gradw$ \textsl{a.e.} on $\RN$, so that we also have
\begin{equation}
\lim_{k\to\infty}\IR\n{\gradulk}^{p-2}\gradulk\cdot\gradv\dx=\IR\n{\gradw}^{p-2}\gradw\cdot\gradv\dx\nonumber
\end{equation}
and
\begin{equation}
\lim_{k\to\infty}\IR\ulk^{q-1}vV\dx=\IR\omega^{q-1}vV\dx\nonumber
\end{equation}
for all $v\in\MVR$.
Upon letting $k\to\infty$ on both sides of \eqref{eq3.14}, it shows
\begin{equation}\label{eq3.15}
\IR\n{\gradw}^{p-2}\gradw\cdot\gradv\dx+\IR\omega^{q-1}vV\dx=\lambda_1\IR\omega^{r-1}vK\dx.
\end{equation}
Using \cite[theorem 1.9]{LL} again and \eqref{eq3.12}, one sees $\IR\omega^rK\dx\geq\left(\lambda(1)\hspace{0.2mm}C^{\frac{p}{r}}_K\right)^{\frac{r}{p-r}}>0$ in view of the compact embedding $\MVR\hookrightarrow\LKR$.
So, $\omega\geq0$ is a nontrivial positive solution in $\MVR$ to equation \eqref{eq1.2}; that is, $\omega=u_{\lambda_1}$ in common practice notation.

On the other hand, from the discussions in \cite[lemma 3]{PR} and \cite[lemma 3]{Ra}, one knows any other solution $\ult\geq0$ to \eqref{eq1.2} at $\lambda\left(>\lambda_2\right)$, if it exists, should satisfy $\ult\leq\ul$ with $\Jc(\ul)<0$.
Furthermore, it is easily seen from \eqref{eq3.13} that
\begin{equation}\label{eq3.16}
\begin{split}
&\,\Jc(u)\geq\frac{1}{p}\IR\n{\gradu}^p\dx+\frac{1}{q}\IR\n{u}^qV\dx-\frac{\lambda}{r}\,C_K\left(\IR\n{\gradu}^p\dx\right)^{\frac{r}{p}}\\
\geq&\left(\frac{1}{p}-\frac{\lambda}{r}\,C_K\nm{\n{\gradu}}^{r-p}_{p,\hspace{0.2mm}\RN}\right)\nm{\n{\gradu}}^p_{p,\hspace{0.2mm}\RN}\geq\eta>0,
\end{split}
\end{equation}
provided $0<\nm{\n{\gradu}}_{p,\hspace{0.2mm}\RN}<\min\bre{\nm{\n{\gradul}}_{p,\hspace{0.2mm}\RN},\left(\frac{r}{\lambda pC_K}\right)^{\frac{1}{r-p}}}$.
Thus, exploiting the important mountain pass theorem of Candela and Palmieri \cite[theorem 2.5]{CP} (see also \cite[theorem a.3]{AP} for a closely related result using a different proof), there exists a sequence $\bre{u_l:l\geq1}$ in $\MVR$ (without loss of generality, we select $u_l\geq0$ as $\Jc(u)=\Jc(\n{u})$) satisfying $\Jc(u_l)\to c>0$ and $\Jc'(u_l)\to0$ in the dual space of $\MVR$ when $l\to\infty$, where $c:=\inf\limits_{h\in\mathscr{H}}\max\limits_{z\in\left[0,1\right]}\Jc(h(z))>0$ for $\mathscr{H}:=\bre{h\in C\left(\left[0,1\right];\MVR\right):h(0)=0,\,h(1)=\ul}$.
Lemma \ref{L3.2} yields a subsequence $\bre{u_l:l\geq1}$, using the same notation, such that $\n{\gradu_l}\rightharpoonup\n{\gradx}$ in $\LpR$, $u_l\rightharpoonup\xi$ in $\LVR$ and $u_l\to\xi$ in $\LKR$ for some function $\xi\in\MVR$ with $u_l\to\xi$ \textsl{a.e.} on $\RN$.
The same analyses then deduce $u_l\to\xi$ in $\MVR$.
Consequently, $\ult:=\xi\geq0$ is another nontrivial solution to \eqref{eq1.2} in $\MVR$ that is distinct from $\ul$ since $\Jc(\ult)=c>0$.
\end{proof}

% +++ below is final remark %%%%%%%%%%%%%%%%%%%%%%%%%%%%%%%%%%%%%%%%%%%%%%%%%%%%%%%%%%%%%%%%%%%%%%%%%%%%%%%%%%%%%%%%%%%%%%%%%%%%%%%%%%%%%%%%%%%%%%%%
% &&& %%%%%%%%%%%%%%%%%%%%%%%%%%%%%%%%%%%%%%%%%%%%%%%%%%%%%%%%%%%%%%%%%%%%%%%%%%%%%%%%%%%%%%%%%%%%%%%%%%%%%%%%%%%%%%%%%%%%%%%%%%%%%%%%%%%%%%%%%%%%%%
\vskip 2pt \small{\textbf{Final Note.} A careful reading of our proofs for propositions \ref{P2.3}, \ref{P2.4} and theorem \ref{T2.6} reveals the respective condition $K^{\beta_1}(x)/V^{\beta_2}(x)\in\LlR$ can be released to the weaker one: $K^{\beta_1}(x)/V^{\beta_2}(x)$ is eventually integrable as $\n{x}\to\infty$ for suitable exponents $\beta_1,\beta_2>0$ independent of $u$.}

% +++ below is appendix %%%%%%%%%%%%%%%%%%%%%%%%%%%%%%%%%%%%%%%%%%%%%%%%%%%%%%%%%%%%%%%%%%%%%%%%%%%%%%%%%%%%%%%%%%%%%%%%%%%%%%%%%%%%%%%%%%%%%%%%%%%%
% &&& %%%%%%%%%%%%%%%%%%%%%%%%%%%%%%%%%%%%%%%%%%%%%%%%%%%%%%%%%%%%%%%%%%%%%%%%%%%%%%%%%%%%%%%%%%%%%%%%%%%%%%%%%%%%%%%%%%%%%%%%%%%%%%%%%%%%%%%%%%%%%%
\appendix
\section{}
\noindent We provide below two compact embedding results that may be of independent interest; the proofs are omitted since they follow verbatim via lemma \ref{L2.2} and \cite[theorems 4.4 and 4.5]{Ha3}.

Recall a function $u\in\LlocR$ is said to \textsl{vanish at infinity} provided $\mathscr{L}\left(\left\{x\in\RN:\n{u(x)}\geq c\right\}\right)<\infty$ and \textsl{vanish at infinity weakly} provided $\lim\limits_{\n{x}\to\infty}\mathscr{L}\left(\mathbf{B}(x)\cap\left\{x\in\RN:\n{u(x)}\geq c\right\}\right)=0$ for all constants $c>0$, with $\mathscr{L}$ the Lebesgue measure and $\mathbf{B}(x)$ the unit ball centered at $x\in\RN$.

\begin{thm}\label{TA.1}
Suppose $N\leq p<\infty$, $1\leq q\leq r<\infty$, and $K(x),V(x)>0$ satisfy $K(x)\in\LaOm$ for some $\alpha\in\left(1,\infty\right]$ and all $\Om$ with $\mathscr{L}\left(\Om\right)<\infty$, $\inf\limits_{\RN}V(x)\geq V_0>0$ while $K(x)V^{-\tau}(x)$ vanishing at infinity with $\tau\in\left(0,1\right)$ if $q<r$ and $\tau=1$ if $q=r$.
Then, the embedding $\MVR\hookrightarrow\LKR$ is compact.
\end{thm}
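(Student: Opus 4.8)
The plan is to argue exactly along the lines of the proofs of propositions \ref{P2.3} and \ref{P2.4} and theorem \ref{T2.6}; the one genuinely new point is how the ``vanishing at infinity'' of $K(x)V^{-\tau}(x)$ is used in place of the global integrability of a power of a ratio of $K$ and $V$. So let $\bre{u_k:k\geq1}$ be a sequence in $\MVR$ with $u_k\rightharpoonup0$ as $k\to\infty$ and $\nm{u_k}_{\MVR}$ uniformly bounded; recalling that $\MVR$ is a subspace of $\MRN$ (because $\inf_{\RN}V\geq V_0>0$), the sequence is uniformly bounded in $\MRN$ as well, so lemma \ref{L2.2} yields, for every fixed $s\in\left[q,\infty\right)$, a uniform bound on $\nm{u_k}_{s,\hspace{0.2mm}\RN}$. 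One then splits $\IR\n{u_k}^rK\dx$ as in \eqref{eq2.5} and estimates the two pieces separately, the starting identity being $\n{u_k}^rK=\bre{\n{u_k}^qV}^{\tau}\n{u_k}^{r-q\tau}\bre{KV^{-\tau}}$, valid because $r-q\tau\geq0$ (indeed $q\tau\leq q\leq r$); when $q=r$ (so $\tau=1$) this degenerates to $\n{u_k}^rK=\n{u_k}^qV\cdot\bre{KV^{-1}}$.

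For the tail over $\BRc$, introduce a parameter $c>0$ and split $\BRc$ into the part where $K(x)V^{-\tau}(x)<c$ and the set $E_c:=\bre{x\in\RN:K(x)V^{-\tau}(x)\geq c}$, which has finite Lebesgue measure by hypothesis. On the first part, H\"older's inequality with exponents $\tau^{-1}$ and $\left(1-\tau\right)^{-1}$ followed by lemma \ref{L2.2} with $s=\frac{r-q\tau}{1-\tau}\in\left[q,\infty\right)$ (the membership being a consequence of $r\geq q$) bounds the corresponding integral by $c\,\nm{u_k}^{q\tau}_{\LVR}\nm{u_k}^{r-q\tau}_{s,\hspace{0.2mm}\RN}\leq cC$ with $C>0$ independent of $k$ and $R$ --- in the degenerate case one simply gets $c\,\nm{u_k}^q_{\LVR}\leq cC$ --- so this contribution is killed by taking $c$ small. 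On $E_c$, H\"older's inequality with exponents $\alpha$ and $\frac{\alpha}{\alpha-1}$ together with $K\in\LalocR$ bounds $\int_{E_c\cap\BRc}\n{u_k}^rK\dx$ by $\nm{K}_{\alpha,\hspace{0.2mm}E_c\cap\BRc}\nm{u_k}^r_{\frac{\alpha r}{\alpha-1},\hspace{0.2mm}\RN}$ with $\frac{\alpha r}{\alpha-1}\in\left[q,\infty\right)$; since $\mathscr{L}\left(E_c\right)<\infty$ one has $\mathscr{L}\left(E_c\cap\BRc\right)\to0$ and hence $\nm{K}_{\alpha,\hspace{0.2mm}E_c\cap\BRc}\to0$ as $R\to\infty$, so once $c$ is fixed this contribution tends to zero uniformly in $k$ as $R\to\infty$ (the case $\alpha=\infty$ being similar and slightly easier). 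Incidentally, taking $c=1$ here also establishes the continuity of the embedding $\MVR\to\LKR$.

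The integral over $\BR$ is treated verbatim as in \eqref{eq2.6}: since $N\leq p$ and $\inf_{\BR}V\geq V_0>0$, lemma \ref{L2.1} furnishes the compact chain $\MVBR\to\MBR\hookrightarrow L^{\frac{\alpha r}{\alpha-1}}\left(\BR\right)\to\LKBR$, so that, after passing to a subsequence, $\nm{u_k}_{\LKBR}\to0$. Combining the three estimates in the correct order --- fix $c$ small, then $R$ large, then let $k\to\infty$ (with the usual diagonal argument over an increasing sequence of balls) --- gives $\IR\n{u_k}^rK\dx\to0$ along a subsequence, which is the asserted compactness.

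The main obstacle, as already indicated, is the tail estimate: one must replace the transparent ``$K^{\beta_1}/V^{\beta_2}\in\LlR$'' bookkeeping of propositions \ref{P2.3}, \ref{P2.4} and theorem \ref{T2.6} by the superlevel-set splitting above, observe that the superlevel sets $E_c$ of $KV^{-\tau}$ inherit enough integrability of $K$ from the local hypothesis $K\in\LalocR$ on finite-measure sets, check that $\mathscr{L}\left(E_c\cap\BRc\right)\to0$, and order the three limits $c\to0^+$, $R\to\infty$, $k\to\infty$ correctly. The remaining ingredients are already in hand: the interpolation inequalities behind lemmas \ref{L2.1} and \ref{L2.2}.
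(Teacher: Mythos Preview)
Your argument is correct and is precisely the kind of proof the paper has in mind: the paper omits the details entirely, saying only that the result follows ``verbatim via lemma \ref{L2.2} and \cite[theorems 4.4 and 4.5]{Ha3}'', and your proof supplies exactly those details --- the ball/tail split, the local compact chain \eqref{eq2.6} on $\BR$, and the superlevel-set decomposition $\BRc=\bre{KV^{-\tau}<c}\cup\bigl(E_c\cap\BRc\bigr)$ that converts the ``vanishing at infinity'' hypothesis into a usable smallness condition. One minor remark: passing to a subsequence on $\BR$ is unnecessary, since a compact operator sends weakly convergent sequences to strongly convergent ones, so the full sequence already satisfies $\nm{u_k}_{\LKBR}\to0$ for each fixed $R$; this lets you avoid the diagonal argument altogether and take the limits $c\to0^+$, $R\to\infty$, $k\to\infty$ directly.
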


\begin{thm}\label{TA.2}
Assume that $N\leq p<\infty$, $1\leq q\leq N$, $q\leq r<\infty$, and $K(x),V(x)>0$ satisfy $K(x)\in\LaR$ for some $\alpha\in\left(1,\infty\right]$, $\inf\limits_{\RN}V(x)\geq V_0>0$ whereas $K(x)V^{-\tau}(x)$ vanishing at infinity weakly with $\tau\in\left(0,1\right)$ if $q<r$ and $\tau=1$ if $q=r$.
Then, the embedding $\MVR\hookrightarrow\LKR$ is compact.
\end{thm}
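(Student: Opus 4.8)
The plan is to follow the two-region template of Propositions~\ref{P2.3}, \ref{P2.4} and Theorems~\ref{T2.5}, \ref{T2.6}, using Lemma~\ref{L2.2} (together with, for $p>N$, Morrey's embedding of $W^{1,\hspace{0.2mm}p}$ on unit balls) in place of \eqref{eq2.2} and \eqref{eq2.3}, and invoking the weak vanishing of $KV^{-\tau}$ only where mass of a sequence escaping to infinity must be controlled. Continuity of $\MVR\to\LKR$ is immediate: with $\alpha':=\frac{\alpha}{\alpha-1}$ (read as $\alpha'=1$ when $\alpha=\infty$) one has $r\alpha'\geq r\geq q$, so Lemma~\ref{L2.2} gives $\nm{u}_{r\alpha',\hspace{0.2mm}\RN}\leq C\nm{u}_{\MVR}$ and \textsf{H\"{o}lder}'s \textsf{inequality} yields $\IR\n{u}^rK\dx\leq\nm{u}^r_{r\alpha',\hspace{0.2mm}\RN}\nm{K}_{\alpha,\hspace{0.2mm}\RN}\leq C'\nm{u}^r_{\MVR}$ via $K\in\LaR$.

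For the compactness, let $\bre{u_k:k\geq1}\subseteq\MVR$ satisfy $u_k\rightharpoonup0$ with $\nm{u_k}_{\MVR}$ uniformly bounded; it suffices to show $u_k\to0$ in $\LKR$ along a subsequence. Split $\IR\n{u_k}^rK\dx=\IBR\n{u_k}^rK\dx+\IBRc\n{u_k}^rK\dx$ as in \eqref{eq2.5}. Over $\BR$, since $p\geq N$ and $1\leq\frac{\alpha r}{\alpha-1}<\infty$, Lemma~\ref{L2.1} gives the compact chain $\MVBR\to\MBR\hookrightarrow L^{\frac{\alpha r}{\alpha-1}}\left(\BR\right)\to\LKBR$ exactly as in \eqref{eq2.6}, so along a relabelled subsequence $\nm{u_k}_{\LKBR}\to0$. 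Over $\BRc$, when $\alpha<\infty$ one simply bounds $\IBRc\n{u_k}^rK\dx\leq\nm{u_k}^r_{r\alpha',\hspace{0.2mm}\RN}\nm{K}_{\alpha,\hspace{0.2mm}\BRc}$, which tends to $0$ as $R\to\infty$ \emph{uniformly} in $k$ because $K\in\LaR$; with the $\BR$ estimate this already settles that subcase.

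The remaining case $\alpha=\infty$, i.e.\ $K\in L^{\infty}\left(\RN\right)$, is where the weak vanishing of $KV^{-\tau}$ must be used, $\IBRc\n{u_k}^rK\dx$ being in general not uniformly small. Here I would first exploit the potential: for $T>0$ and a fixed $\sigma\in\left(0,1\right)$, writing $\n{u_k}^r=\bre{\n{u_k}^qV}^{\sigma}\n{u_k}^{r-q\sigma}V^{-\sigma}\leq T^{-\sigma}\bre{\n{u_k}^qV}^{\sigma}\n{u_k}^{r-q\sigma}$ on $\bre{V\geq T}$, \textsf{H\"{o}lder}'s \textsf{inequality} with exponents $\sigma^{-1},\left(1-\sigma\right)^{-1}$ and Lemma~\ref{L2.2} (note $\frac{r-q\sigma}{1-\sigma}\geq r\geq q$) give $\int_{\BRc\cap\bre{V\geq T}}\n{u_k}^rK\dx\leq\nm{K}_{\infty,\hspace{0.2mm}\RN}T^{-\sigma}\nm{u_k}^{q\sigma}_{\LVR}\nm{u_k}^{r-q\sigma}_{\frac{r-q\sigma}{1-\sigma},\hspace{0.2mm}\RN}\leq C\hspace{0.2mm}T^{-\sigma}$, uniformly in $k$. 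On the complement $\bre{V<T}$ one has $K\leq T^{\tau}KV^{-\tau}$ (using $\tau\in\left(0,1\right]$), so the task reduces to the weight $KV^{-\tau}$, and I would run the covering argument of \cite[theorem 4.5]{Ha3}: cover $\BRc$ by a lattice $\bre{\mathbf{B}(x_j)}$ of unit balls with uniformly bounded overlap; for $c>0$, split each $\mathbf{B}(x_j)=\bre{KV^{-\tau}<c}\cup\bre{KV^{-\tau}\geq c}$, bound the first pieces by $c\hspace{0.2mm}T^{\tau}\IR\n{u_k}^r\dx\leq C\hspace{0.2mm}c\hspace{0.2mm}T^{\tau}$ after summing in $j$; on the second pieces use that the weak vanishing makes $\mathscr{L}\!\left(\mathbf{B}(x_j)\cap\bre{KV^{-\tau}\geq c}\right)$ as small as we wish once $\n{x_j}$ is large, combine with the local ($L^{\infty}$ via Morrey, or high-$L^s$ via Lemma~\ref{L2.2}) control of $u_k$ on each $\mathbf{B}(x_j)$, and sum the resulting local contributions against $\nm{u_k}_{\MVR}$ by the summation of local Sobolev norms performed in \cite[theorem 4.5]{Ha3}. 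Letting $R\to\infty$ for fixed $c,T$ makes the last piece negligible; sending then $c\to0$ and $T\to\infty$ gives $\sup_k\IBRc\n{u_k}^rK\dx\to0$, so that $u_k\to0$ in $\LKR$ along a subsequence.

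The main obstacle is exactly this $\alpha=\infty$ tail over $\bre{V<T}$: the weak vanishing furnishes only \emph{local} smallness — thin level sets inside each sliding unit ball — and turning this into a \emph{global}, $k$-uniform bound on $\IBRc\n{u_k}^rK\dx$ forces one to sum infinitely many small local contributions without losing uniformity in $k$. The constraint $q\leq N\leq p$ is what makes this possible: $p\geq N$ supplies the local $L^{\infty}$ (Morrey) or high-$L^s$ control of $u_k$ needed to benefit from the thinness of the bad set in a unit ball, while $q\leq N$ secures, through Lemma~\ref{L2.2}, the global higher integrability of $u_k$ used both to absorb $\n{u_k}^{r-q\sigma}$ on $\bre{V\geq T}$ and to run the covering summation; this is precisely the technical content packaged in \cite[theorems 4.4 and 4.5]{Ha3}.
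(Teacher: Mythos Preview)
The paper omits the proof entirely, stating only that it follows verbatim via Lemma~\ref{L2.2} and \cite[theorems 4.4 and 4.5]{Ha3}. Your proposal reconstructs precisely this route: the two-region split \eqref{eq2.5}, the local compact chain \eqref{eq2.6} via Lemma~\ref{L2.1} on $\BR$, and the covering argument of \cite[theorem 4.5]{Ha3} for the tail on $\BRc$, with Lemma~\ref{L2.2} supplying the global higher integrability that replaces \eqref{eq2.2}. So your approach is essentially the one the paper points to, and is correct.

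Two minor remarks. First, your observation that the case $\alpha<\infty$ needs no weak vanishing at all (the global integrability of $K$ alone kills the tail) is a genuine simplification not flagged in the paper. Second, your $T$-splitting on $\BRc$ into $\bre{V\geq T}$ and $\bre{V<T}$ is a bit more elaborate than necessary: the decomposition $\n{u_k}^rK=\bre{\n{u_k}^qV}^{\tau}\n{u_k}^{r-q\tau}\bre{KV^{-\tau}}$ (noting $r-q\tau\geq0$ in both the $q<r$, $\tau\in(0,1)$ and $q=r$, $\tau=1$ regimes) lets one feed the weak vanishing of $KV^{-\tau}$ directly into the covering-and-summation machinery of \cite[theorem 4.5]{Ha3} without the preliminary split; this is presumably the ``verbatim'' adaptation the paper has in mind. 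Your extra layer is harmless, though, and the final uniform-in-$k$ control of the tail follows either way.
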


% -----------------------------------------------------------------------------

\bibliographystyle{amsplain}
%\bibliography{xbib}

\end{document}